\numberwithin{equation}{section}
\setlist[1]{leftmargin=*,label={\rm(\arabic*)}}
\theoremstyle{plain}
\newtheorem{thm}{Theorem}[section]
\newtheorem{prop}[thm]{Proposition}
\newtheorem{cor}[thm]{Corollary}
\newtheorem{lem}[thm]{Lemma}
\theoremstyle{definition}
\theoremstyle{remark}
\newtheorem{rmk}[thm]{Remark}
\renewcommand{\top}{\operatorname{top}}
\DeclareMathOperator{\soc}{soc}
\DeclareMathOperator{\rank}{rank}
\DeclareMathOperator{\pd}{pd}
\DeclareMathOperator{\id}{id}
\DeclareMathOperator{\diag}{diag}
\newcommand{\lto}{\longrightarrow}
\mathchardef\mhyphen="2D
\newcommand{\op}{\mathrm{op}}
\newcommand{\inj}{\mathrm{inj}}
\newcommand{\bfC}{\mathbf{C}}
\newcommand{\bfP}{\mathbf{P}}
\newcommand{\bfQ}{\mathbf{Q}}
\newcommand{\bfX}{\mathbf{X}}
\newcommand{\bfY}{\mathbf{Y}}
\newcommand{\bbN}{\mathbb{N}}
\newcommand{\bbZ}{\mathbb{Z}}
\renewcommand{\S}{\mathcal{S}}
\title[Homological properties of Nakayama algebras]
      {A note on homological properties of Nakayama algebras}
\author[Dawei Shen]{Dawei Shen}
\address{Department of Mathematics \\
         Shanghai Key Laboratory of PMMP \\
         East China Normal University \\
         Shanghai 200241 \\
         P. R. China}
\email{dwshen@math.ecnu.edu.cn}
\subjclass[2010]{Primary 16G20; Secondary 13E10}
\keywords{Nakayama algebra, Resolution quiver}
\date{\today}
\begin{document}
\begin{abstract}
Using the resolution quiver for a connected Nakayama algebra,
a fast algorithm is given
to decide whether its global dimension is finite or not
and whether it is Gorenstein or not.
The latter strengthens a result of Ringel.
\end{abstract}
\maketitle

\section{Introduction} \label{sec:1}
Let $A$ be a connected Nakayama algebra.
Following \cite{Rin2013,Gus1985,She2014},
its {\em resolution quiver} is defined as follows:
the vertex set is the set of non-isomorphic simple $A$-modules;
there is a unique arrow
from each simple $A$-module $S$ to $\gamma(S)=\tilde{\tau}\soc{P(S)}$.
Here, $P(S)$ is the projective cover of $S$
and `soc' is the socle of a module.
If $A$ has a simple projective module,
denote by $S_\inj$ the unique simple injective $A$-module up to isomorphism.
Then
\[\tilde{\tau}(S) =
\begin{cases}
\tau(S), & \mbox{ if $S$ is not projective}  \\
S_\inj, & \mbox{ otherwise}
\end{cases}\]
for each simple $A$-module $S$,
where $\tau$ is the Auslander-Reiten translation \cite{ARS1995}.

Let $A$ be a connected Nakayama algebra.
Denote by $R(A)$ its resolution quiver.
It is known that each connected component of $R(A)$ has a unique cycle.
For a cycle $C$ in $R(A)$ with vertices $S_1,S_2,\cdots,S_m$,
the {\em weight} $w(C)$ of $C$ is  $\sum_{k=1}^m\frac{c_k}{n}$.
Here, $n$ is the number of non-isomorphic simple $A$-modules
and $c_k$ is the composition length of $P(S_k)$.
It turns out that $w(C)$ is an integer
and all cycles in $R(A)$ have the same weight \cite{She2014}.
The weight $w(C)$ is called the weight of the algebra $A$.

The resolution quiver is a very efficient tool
for investigating the homological properties of Nakayama algebras.
The Gorenstein projective modules for Nakayama algebras
are described by resolution quivers \cite{Rin2013}.
Resolution quivers are also used
to study the singularity categories of Nakayama algebras \cite{She2015}.

The resolution quiver of a connected Nakayama algebra gives a fast algorithm
to decide whether it is Gorenstein  or not
and whether it is CM-free  or not  \cite{Rin2013}.
In this paper,
we show that the resolution quiver of a connected Nakayama algebra
also gives a fast algorithm
to decide whether its global dimension is finite  or not.

More precisely, we have the following.

\begin{prop} \label{prop:1.1}
Let $A$ be a connected Nakayama algebra.
Then $A$ has finite global dimension if and only if
its resolution quiver  is connected and its weight  is $1$.
\end{prop}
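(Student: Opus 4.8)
The plan is to translate the statement into a combinatorial question about the map $\gamma$ and then into a tiling problem on $\bbZ$. First I would fix a cyclic labelling $S_0,\dots,S_{n-1}$ of the simples and write $c_i$ for the length of $P(S_i)$, so that $P(S_i)$ is uniserial with composition factors $S_i,S_{i+1},\dots,S_{i+c_i-1}$ (indices mod $n$). A direct computation gives $\soc P(S_i)=S_{i+c_i-1}$ and hence $\gamma(S_i)=S_{i+c_i}$; thus $R(A)$ is the functional graph of $i\mapsto i+c_i$ on $\bbZ/n$, and the weight of a cycle equals the number of times it winds around $\bbZ/n$, since $\sum_k c_{p_k}$ over the cycle telescopes to $w\cdot n$ in the integral lift. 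Every syzygy of a simple module is again uniserial, and there are only finitely many uniserial modules; so a simple has infinite projective dimension exactly when its syzygies never vanish, hence become periodic. Consequently $A$ has finite global dimension if and only if it has no non-projective module $M$ with $\Omega^T M\cong M$ for some $T\ge 1$.

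Second I would record the recursion governing a minimal projective resolution of $S_i$. Writing $\hat u_k\in\bbZ$ for the lift of the top of $\Omega^k S_i$ and $\ell_k$ for its length, one has $\ell_0=1$, $\hat u_{k+1}=\hat u_k+\ell_k$ and $\ell_{k+1}=c_{\hat u_k}-\ell_k$, the resolution stopping exactly when some $\ell_k=0$. The decisive observation is that $\hat u_{k+2}=\hat u_k+c_{\hat u_k}$, so that $P(S_{u_k})$ occupies precisely the interval $[\hat u_k,\hat u_{k+2})$. Hence the even-indexed projectives tile a half-line of $\bbZ$ and the odd-indexed projectives tile a shifted half-line, each family of left endpoints being a $\gamma$-orbit; positivity of all the $\ell_k$ says exactly that the two families of endpoints strictly interleave. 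A non-projective $\Omega$-periodic module is therefore the same thing as a pair of interleaving periodic tilings of $\bbZ$ by projective-length intervals based at cycle vertices.

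With this dictionary the easy implication is ``connected and weight $1$ $\Rightarrow$ finite global dimension''. When the weight is $1$ the lifted cycle-vertex positions $\hat p_0<\hat p_1<\cdots$ advance by $\hat p_{j+1}-\hat p_j=c_{p_j}$ with $\sum c_{p_j}=n$ over one turn, so within any window of length $n$ the $m$ cycle-vertex residues are distinct and no cycle vertex lies in the interior of another cycle vertex's interval. If moreover $R(A)$ is connected, the even and odd orbits of any periodic configuration both live on the unique cycle, so their endpoint sets are subsets of the single set $\{\hat p_j\}$ and cannot strictly interleave. Hence no periodic module exists and the global dimension is finite.

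The main obstacle is the converse: I must manufacture a periodic module whenever $R(A)$ is disconnected or the weight is at least $2$. When $R(A)$ is disconnected I would interleave the tilings coming from two distinct components, both winding the same number of times, as already happens for the self-injective Nakayama algebra with $n=4$ and all $c_i=2$, whose resolution quiver splits into the two cycles $\{0,2\}$ and $\{1,3\}$; when the weight is at least $2$ a single cycle already leaves enough slack to interleave a phase-shifted copy of its own tiling. Turning these pictures into an explicit positive periodic orbit---choosing the interleaving shift, verifying that the induced interval lengths are the correct values $c_i$, and disposing of the boundary case in which $A$ has a simple projective module, where $\tilde\tau$ replaces $\tau$ in the definition of $\gamma$---is the technical heart of the argument, and is where I expect the real work to lie.
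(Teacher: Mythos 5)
Your dictionary (finite global dimension iff there is no strictly interleaving pair of lifted $\gamma$-walks, equivalently no nonprojective $\Omega$-periodic module) and your proof of the implication ``connected and weight $1$ $\Rightarrow$ finite global dimension'' are essentially sound. But the converse half of the proposition is exactly what you postpone as ``the technical heart'', so the proposal is genuinely incomplete; worse, both constructions you sketch for that half hit concrete obstructions. First, two strictly interleaving periodic point sets must have the same number of points per period $w\cdot n$, i.e.\ the two cycles used must have the same size; so ``interleave the tilings coming from two distinct components'' cannot work as stated when the cycles have different sizes, and you never address this. (In fact no construction is needed for disconnectedness: if $\pd S_i<\infty$, the even and odd walks in the resolution of $S_i$ collide, hence the $\gamma$-orbits of $i$ and of $i+1$ share a vertex and lie in one component of $R(A)$; running over all $i$, finite global dimension already forces $R(A)$ connected.) Second, for $w\ge 2$ a phase-shifted copy of a cycle's own tiling need \emph{not} interleave with it: for the gap sequence $(2,2,8)$ with $n=6$, $w=2$, every shift by $tn$ with $w\nmid t$ puts three shifted points inside the gap of length $8$. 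Such gap sequences are ultimately excluded by the Kupisch conditions $c_{i+1}\ge c_i-1$, but your outline never invokes them, and without them the interleaving claim is simply false. So the converse is not bookkeeping; it needs an idea the proposal does not supply.

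For what it is worth, the converse can be closed inside your framework without exhibiting tilings: when $R(A)$ is connected, whether the two walks of a resolution ever collide depends only on the difference of their entry positions into the lifted cycle, a multiple of $n$ taken modulo $wn$; this defines a function $\theta$ on lifts with values in $\bbZ/w$ satisfying $\theta(x+n)=\theta(x)+1$, and $\pd S_i<\infty$ iff $\theta$ agrees on the two starting lifts of the resolution of $S_i$. If every simple had finite projective dimension, $\theta$ would be constant on $\bbZ$, forcing $w=1$. The paper itself avoids all of this: it applies Lemma~\ref{lem:bij} along the retraction sequence \eqref{eq:2.2} to pass to a selfinjective algebra $A_r$ with the same cycle count and weight, where by \cite[Theorem 3.8]{CY2014} the global dimension of $A$ is finite iff $A_r$ is simple, i.e.\ iff $c(A_r)=1$ and $w(A_r)=1$.
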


As a consequence of Proposition ~\ref{prop:1.1},
the resolution quiver is connected
for a connected Nakayama algebra with finite global dimension.

For a connected Nakayama algebra,
recall from \cite{Rin2013} that a cycle  in its resolution quiver is called  \emph{black}
provided that the projective dimension of each simple module on this cycle is
not equal to $1$.

The following result strengthens \cite[Proposition 5(a)]{Rin2013}.

\begin{prop} \label{prop:1.2}
Let $A$ be a connected Nakayama algebra with infinite global dimension.
Then $A$ is a Gorenstein algebra if and only if all cycles in
its resolution quiver  are black.
\end{prop}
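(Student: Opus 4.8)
The plan is to reduce to the cyclic case and then to match, on the resolution quiver, the Gorenstein condition against the blackness condition through the behaviour of syzygies. First I would record the reductions forced by the hypothesis: every linear Nakayama algebra has finite global dimension, so the assumption forces $A$ to be cyclic. In particular there is no simple projective module, $\tilde{\tau}=\tau$, every simple module $S$ is non-projective, and $P(S)$ has length at least $2$. For such an $S$ one has $\Omega S=\rad P(S)$, and $\pd S=1$ holds exactly when $\rad P(S)$ is projective. Hence a cycle is black precisely when $\rad P(S)$ is non-projective for every simple $S$ on it, that is, when no first syzygy of a cycle-simple splits off a projective summand.

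For the homological input I would use two ingredients. On the one hand, an Artin algebra is Gorenstein if and only if there is an integer $d$ with $\Omega^{d}(\mod A)\subseteq\Gproj A$, i.e.\ sufficiently high syzygies of every module are Gorenstein projective. On the other hand, I would use the description of the indecomposable Gorenstein projective modules by means of the resolution quiver \cite{Rin2013}: apart from the projectives, these are the non-projective uniserial modules whose $\Omega$-orbit is periodic and remains among the modules attached to the cycles of $R(A)$. The proof then reduces to one identification: the modules attached to a cycle $C$ are Gorenstein projective if and only if $C$ is black.

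I would prove the forward implication by contraposition. If some cycle $C$ is not black, choose a simple $S$ on $C$ with $\pd S=1$; then $\Omega S=\rad P(S)$ is projective while $S$ lies on a $\gamma$-cycle. Running $\gamma$ around $C$ produces a uniserial module which is an $\Omega$-image of arbitrarily high order, yet whose syzygy chain passes through the projective module $\Omega S$; applying $\Hom_{A}(-,A)$ shows this chain fails to be totally acyclic, so the module is not Gorenstein projective. Thus $\Omega^{d}(\mod A)\not\subseteq\Gproj A$ for every $d$ and $A$ is not Gorenstein. Concretely this is the phenomenon that such an $S$ satisfies $\pd S=1<\infty=\id S$, the infinite injective dimension being forced by the periodicity of the cosyzygies of $S$ along $C$. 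For the converse I assume every cycle is black. Since any module is filtered by simple modules it suffices to control syzygies of simples, and blackness guarantees that once the $\gamma$-orbit of a simple has entered a cycle its higher syzygies form a periodic family of non-projective uniserial modules, namely the cycle modules, which are now Gorenstein projective; a simple whose syzygies meet no cycle already has finite projective dimension. Choosing $d$ to be the largest pre-period among the finitely many vertices of $R(A)$ yields $\Omega^{d}(\mod A)\subseteq\Gproj A$, so $A$ is Gorenstein.

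The step I expect to be the main obstacle is the crux identification that a cycle is black if and only if its attached modules are Gorenstein projective, carried out uniformly when $R(A)$ is disconnected and its cycles have different lengths -- this is exactly the generality in which the statement strengthens \cite[Proposition 5(a)]{Rin2013}. The delicate points are to show that a single simple of projective dimension $1$ anywhere on any cycle already destroys total acyclicity (equivalently, desynchronizes projective and injective dimension), and conversely that blackness of all cycles makes the syzygies of all simple modules periodic and free of projective summands simultaneously, so that one global $d$ suffices.
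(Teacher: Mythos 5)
Your overall strategy is reasonable and your reduction to the cyclic case is correct, but there are two genuine gaps, and they sit exactly where the new content of the proposition lies. The first is in your forward direction (``Gorenstein implies all cycles black''). The mechanism you describe conflates the $\gamma$-orbit with actual syzygy orbits: if $S$ lies on a cycle and $\pd S=1$, then $\Omega^{2}(S)=0$, so the syzygy chain starting at $S$ terminates, and the $\gamma$-arrow out of $S$ records no syzygy (the identity $\gamma(S)=\soc\Omega^{2}(S)$ holds only when $\pd S\geq 2$). Consequently there is no ``uniserial module which is an $\Omega$-image of arbitrarily high order whose syzygy chain passes through the projective module $\Omega S$'', and the total-acyclicity argument has nothing to act on. Your fallback---that $\pd S=1<\infty=\id S$ is incompatible with Gorensteinness---identifies the correct endpoint, but your justification of $\id S=\infty$, namely ``periodicity of the cosyzygies of $S$ along $C$'', is also a confusion: $C$ is a cycle of $\gamma$, whereas cosyzygies are governed by the other map $\psi=\tilde\tau^{-1}\top I(-)$, so $\gamma$-cyclicity of $S$ gives no periodicity of its cosyzygies. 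The statement you actually need---over an algebra of infinite global dimension every $\gamma$-cyclic simple has infinite injective dimension---is true, but it is precisely the nontrivial input the paper supplies via the dual of Madsen's lemma (Lemma~\ref{lem:mad}(4) applied to $A^{\op}$, packaged in Lemma~\ref{lem:proj-inj}); your proposal asserts it with an incorrect reason and no proof.

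The second gap is the ``crux identification'' you flag yourself: that the modules attached to a cycle are Gorenstein projective if and only if the cycle is black. This is not a routine verification: $\Omega$-periodicity alone does not imply Gorenstein projectivity for Nakayama algebras---otherwise no connected Nakayama algebra of infinite global dimension could be CM-free, contrary to \cite{Rin2013}---and blackness is exactly what makes a complete resolution totally acyclic. If you resolve this by citing Ringel's description of the Gorenstein projective modules, then your backward direction becomes essentially a citation of \cite[Proposition 5(a)]{Rin2013}, the very statement this proposition is meant to strengthen, so the only genuinely new implication is the forward one, where your argument breaks down. For comparison, the paper's proof avoids Gorenstein projective modules and total acyclicity altogether: it works with the two maps $\gamma$ and $\psi$, Madsen's parity lemma, the counting fact that there are as many $\gamma$-cyclic as $\psi$-cyclic simples \cite[Corollary 2.3]{She2014}, and Lemma~\ref{lem:proj-inj}, checking Gorensteinness directly from the definition ($\id A<\infty$ and $\pd DA<\infty$); in particular it never needs your correct but heavy criterion that $A$ is Gorenstein if and only if $\Omega^{d}(\mod A)\subseteq\Gproj A$ for some $d$.
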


Let $A$ be a connected  Nakayama algebra.
Take  a complete set $\{S_1,S_2,\cdots,S_n\}$ of pairwise non-isomorphic simple $A$-modules.
The {\em Cartan matrix} $\bfC_A$ of $A$ is an $n \times n$ matrix $(c_{ij})$,
where $c_{ij}$ is the number of copies of $S_i$
appearing in a composition series for the projective cover of $S_j$.
Denote by $\bfC_A^T$ the transpose of $\bfC_A$.

Denote by $c$ the number of cycles and
by $b$ the number of black cycles in the resolution quiver of $A$.

The following result gives a connection
between Cartan matrices and resolution quivers.

\begin{prop} \label{prop:1.3}
Let $A$ be a connected Nakayama algebra.
\begin{enumerate}
  \item The rank of $\bfC_A$ is $n + 1 - c$.
  \item If $b$ is nonzero, then the rank of $(\bfC_A,\bfC_A^T)$ is $n + 1 - b$.
\end{enumerate}
\end{prop}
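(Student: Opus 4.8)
The plan is to convert the statement into the computation of a kernel dimension and to read that kernel off the combinatorics of the projective and injective modules. Under the standard inner product on $\mathbb{Q}^n$ one has $\Im\bfC_A=(\Ker\bfC_A^T)^{\perp}$ and $\Im\bfC_A^T=(\Ker\bfC_A)^{\perp}$, so the column space of the $n\times 2n$ matrix $(\bfC_A,\bfC_A^T)$ is $\Im\bfC_A+\Im\bfC_A^T$, whose orthogonal complement is $\Ker\bfC_A\cap\Ker\bfC_A^T$. Hence $\rank(\bfC_A,\bfC_A^T)=n-\dim(\Ker\bfC_A\cap\Ker\bfC_A^T)$, and it suffices to prove that the common kernel has dimension $b-1$. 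The hypothesis $b\ge 1$ is used precisely here: it is what keeps this count nonnegative, and the degenerate case $b=0$ must be excluded.

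Next I would describe the common kernel combinatorially. One may assume $A$ is cyclic, with simples $S_0,\dots,S_{n-1}$ indexed by $\bbZ/n$ (the linear case has $b=0$ and is excluded). The $i$-th column of $\bfC_A$ is the dimension vector of the uniserial projective $P(S_i)$, whose composition factors form an arc running from its top $S_i$ down to its socle $S_{\gamma(i)+1}$; the $i$-th row is the dimension vector of the injective $I(S_i)$, an arc from its socle $S_i$ up to its top, whose index I denote $\rho(i)$. Thus $x\in\Ker\bfC_A\cap\Ker\bfC_A^T$ precisely when the sum of the entries of $x$ over every such projective and injective arc vanishes. Introduce the cumulative function $\bar Y$ determined by $x_i=\bar Y_i-\bar Y_{i-1}$. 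Exactly as in the proof of (1), summing the projective conditions around a cycle forces $\sum_i x_i=0$, since the weight is a positive integer, so that $\bar Y$ is a well defined function on $\bbZ/n$; each projective arc then yields $\bar Y_i=\bar Y_{\gamma(i)}$ and each injective arc yields $\bar Y_{\rho(j)}=\bar Y_{j-1}$. Both relations have the uniform shape ``$\bar Y$ at the top index equals $\bar Y$ at (socle index $-1$)''. Consequently the common kernel is identified, modulo the constants, with the functions on $\bbZ/n$ that are constant on the classes of the equivalence relation generated by $i\sim\gamma(i)$ and by $\rho(j)\sim j-1$; its dimension equals the number of classes minus $1$.

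It remains to prove the combinatorial identity that the number of these classes is $b$. The relations $i\sim\gamma(i)$ alone produce the $c$ connected components of $R(A)$, so the content is that the injective relations $\rho(j)\sim j-1$ fuse these $c$ components down to exactly $b$. I would establish this by showing that every class contains exactly one black cycle: distinct black cycles are never identified, while every non-black cycle is glued into the class of some black cycle. The key is the identity $\pd S_i=1\iff\rad P(S_i)\text{ is projective}\iff c_i=c_{i-1}+1$: a vertex with $\pd S_i=1$ is exactly the place where the injective relation fails to reverse the corresponding arrow of $R(A)$ and instead diverts into a neighbouring component, whereas along a cycle all of whose vertices satisfy $\pd\neq 1$ the injective relations close the cycle up inside its own component. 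The main obstacle is precisely this last step: one must pin down, in terms of the Kupisch series $(c_i)$ and the lengths of the injectives, how the tops $\rho(j)$ are positioned relative to the projective arcs, and thereby control exactly which components each injective relation glues. As a consistency check, when $A$ is self-injective every cycle is black, $b=c$, and the injective relations merely reproduce the components of $R(A)$, so $\rank(\bfC_A,\bfC_A^T)=\rank\bfC_A=n+1-c=n+1-b$ in agreement with (1); the genuinely new content is the non-Gorenstein case, in which each $\pd=1$ vertex accounts for one merging and so lowers the class count from $c$ to $b$.
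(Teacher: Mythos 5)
Your opening reduction is correct and is a genuinely nice reformulation: indeed $\rank(\bfC_A,\bfC_A^T)=n-\dim(\Ker\bfC_A\cap\Ker\bfC_A^T)$, the kernel conditions are exactly the vanishing of the sums of $x$ over the projective and injective arcs, and the passage to the cumulative function identifies the common kernel with the functions on $\bbZ/n$ constant on the classes of your equivalence relation, modulo constants. But at that point nothing has been proved: the statement ``the number of classes equals $b$'' \emph{is} Proposition~\ref{prop:1.3}(2) in disguise, and you explicitly leave its proof open (``the main obstacle is precisely this last step''). Your sketch --- each class contains exactly one black cycle, controlled by the criterion $\pd S_i=1\iff\rad P(S_i)$ projective --- is plausible, but making it work requires exactly the input the paper develops: the identification of black cycles with $\psi$-cycles (Proposition~\ref{prop:bla-cyc}, resting on Lemma~\ref{lem:mad} and Lemma~\ref{lem:bla-mod}), the fact that the injective relations encode the resolution quiver of $A^{\op}$ together with $c(A^{\op})=c(A)$ and $w(A^{\op})=w(A)$ (Remark~\ref{rmk:det}(2), via Smith normal forms), and an argument ruling out accidental gluings --- in the paper this is the linear independence of the combined family of $\gamma$-cycle and $\psi$-cycle indicator vectors in Proposition~\ref{prop:linear}(3). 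Without some version of these facts, your equivalence classes cannot be counted, so the proposal as it stands has a genuine gap at its central step.

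There is also a concrete error: the claim that the linear case (a simple projective module exists) always has $b=0$ is false. Take the admissible sequence $(2,2,1)$, i.e.\ the quiver $1\to 2\to 3$ with $\rad^2=0$, so $P_1=[S_1,S_2]$, $P_2=[S_2,S_3]$, $P_3=S_3$. The resolution quiver has arrows $S_1\to S_3$, $S_2\to S_1$, $S_3\to S_1$, with unique cycle $S_1\to S_3\to S_1$; here $\pd S_1=2$ and $\pd S_3=0$, so this cycle is black and $b=1\neq 0$. The hypothesis $b\neq 0$ of the proposition is therefore not vacuous in the linear case, and your cyclic setup ($\bbZ/n$-indexing, well-definedness of $\bar Y$) does not cover it. This particular issue is repairable --- for a linear Nakayama algebra $\bfC_A$ is invertible and $c=1$, so $b\neq 0$ forces $b=1$ and $\rank(\bfC_A,\bfC_A^T)=n=n+1-b$ trivially --- but as written the exclusion is based on a false statement, and the main combinatorial claim remains unproved in the cyclic case.
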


The paper is organised as follows.
The proofs  of Proposition ~\ref{prop:1.1} and Proposition ~\ref{prop:1.2}
are given in Section ~\ref{sec:2} and  Section ~\ref{sec:3}, respectively.
In Section ~\ref{sec:4}, we study the connection between Cartan matrices and
resolution quivers for Nakayama algebras and prove Proposition ~\ref{prop:1.3}.

\section{Retractions and resolution quivers} \label{sec:2}
Let $A$ be a connected Nakayama algebra.
Denote by $n=n(A)$ the number of non-isomorphic
simple $A$-modules.
Take a sequence  $(S_1,S_2,\cdots,S_n)$ of pairwise non-isomorphic simple $A$-modules
such that the radical  of $P_i$ is a factor module of $P_{i+1}$ for $1\leq i \leq n-1$
and the radical  of $P_n$  is a factor module of $P_{1}$.
Here, $P_i$  is the projective cover of $S_i$.
Denote by $c_i$ the composition length of $P_i$.
The {\em admissible sequence} for $A$ is given by $\mathbf{c}(A)=(c_1,c_2,\cdots,c_n)$;
see \cite[Chapter IV. 2]{ARS1995}.

Following \cite{Gus1985}, there exists a map
$f_A\colon\{1,2,\cdots,n\}\to\{1,2,\cdots,n\}$
such that $n$ divides $f_A(i)-(c_i+i)$.
For $1\leq i \leq n$, we have $\gamma (S_i)=S_{f_A(i)}$.
Then for $1\leq i,j\leq n$,
there is an arrow $S_i \to S_j$ in the resolution quiver of $A$
if and only if $f_A(i)=j$.

Suppose now that $A$ is not selfinjective.
If  $A$  has no simple projective modules,
after possible cyclic permutations,
we may assume that
its admissible sequence is {\em normalized} \cite{CY2014}, that is,
$p(A)=c_1=c_n-1$.
Here, $p(A)$ is the minimal integer among  $c_i$.
For convenience,
if $A$ has a simple projective module, its admissible sequence is always normalized.

Following \cite{CY2014}, there exists a {\em left retraction} $\eta\colon A \to L(A)$,
where $L(A)$ is a connected Nakayama algebra with admissible sequence
$\mathbf{c}(L(A))=(c_1',c_2',\cdots,c_{n-1}')$ such that
$c_i'=c_i-\left[\frac{c_i+i-1}{n}\right]$ for $1\leq i\leq n-1$.
In particular, $n(L(A))=n(A)-1$.
Here, $[x]$ is the largest integer not greater than a real number $x$.
The corresponding sequence of simple $L(A)$-modules is denoted by $(S_1',S_2',\cdots,S_{n-1}')$.

We need the map $\pi\colon\{1,2,\cdots,n\}\to\{1,2,\cdots,n-1\}$
such that $\pi(i)=i$ for $1\leq i\leq n-1$ and $\pi(n)=1$.

Recall from \cite[Lemma 2.1]{She2014} the following result.
For the convenience of the reader, we give a proof here.

\begin{lem} \label{lem:retra}
Let $A$ be a connected Nakayama algebra which is not selfinjective.
Then $\pi f_A(i)=f_{L(A)}(i)$ for $1\leq i\leq n-1$.
\end{lem}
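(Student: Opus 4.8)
The plan is to reduce the statement to a short computation in modular arithmetic, using the explicit description of the maps $f_A$ and $f_{L(A)}$ together with the retraction formula for $\mathbf{c}(L(A))$. Write $q_i=[\frac{c_i+i-1}{n}]$ for $1\le i\le n-1$, so that the retraction formula reads $c_i'=c_i-q_i$. The starting point is to make $f_A(i)$ explicit: since $f_A(i)$ is the unique element of $\{1,2,\cdots,n\}$ congruent to $c_i+i$ modulo $n$, subtracting off the appropriate multiple of $n$ gives
\[
f_A(i)=(c_i+i)-n\Big[\tfrac{c_i+i-1}{n}\Big]=c_i+i-nq_i.
\]
The key point is that the quotient $q_i$ appearing here is exactly the floor term that defines $c_i'$; this is precisely what links the two maps.

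Next I would identify the residue governing $f_{L(A)}(i)$. By definition $f_{L(A)}(i)$ is the unique element of $\{1,2,\cdots,n-1\}$ congruent to $c_i'+i$ modulo $n-1$. Substituting $c_i'=c_i-q_i$ and using the formula above,
\[
c_i'+i=(c_i+i)-q_i=f_A(i)+nq_i-q_i=f_A(i)+(n-1)q_i,
\]
so that $c_i'+i\equiv f_A(i)\pmod{n-1}$. Hence $f_{L(A)}(i)$ is precisely the representative of $f_A(i)$ in $\{1,2,\cdots,n-1\}$ taken modulo $n-1$.

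It then remains to check that this representative agrees with $\pi f_A(i)$, which I would do by splitting into two cases according to the value $f_A(i)\in\{1,2,\cdots,n\}$. If $f_A(i)\le n-1$, then $f_A(i)$ already lies in $\{1,2,\cdots,n-1\}$, so both $\pi f_A(i)$ and $f_{L(A)}(i)$ equal $f_A(i)$. If $f_A(i)=n$, then $n\equiv 1\pmod{n-1}$ forces $f_{L(A)}(i)=1$, which matches $\pi(n)=1$ by the definition of $\pi$. In either case $\pi f_A(i)=f_{L(A)}(i)$, as required.

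The computation itself is routine; the only delicate points are to confirm that the quotient defining $c_i'$ coincides with the multiple of $n$ subtracted when normalizing $f_A(i)$, and to handle the boundary value $f_A(i)=n$ correctly. The latter is exactly the single index at which $\pi$ deviates from the identity, and it is precisely there that the definition of $\pi$, rather than a literal equality of indices, is needed.
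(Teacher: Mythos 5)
Your proof is correct and follows essentially the same route as the paper: both write $c_i+i=f_A(i)+nq_i$ with $q_i=\left[\frac{c_i+i-1}{n}\right]$, compute $c_i'+i=f_A(i)+(n-1)q_i$, and conclude by reducing modulo $n-1$, with the boundary case $f_A(i)=n$ handled by $\pi(n)=1$. The only difference is presentational: you make explicit the identification of the floor quotient and the case analysis that the paper leaves implicit.
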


\begin{proof}
Let $f_A(i)=j$ and $c_i+i=kn+j$ with $k\in\bbN$.
For $1\leq i \leq n-1$, we have
\begin{equation} \label{eq:2.1}
c_{i}^\prime+i=c_i+i-\left[\frac{c_i+i-1}{n}\right]=kn+j-\left[\frac{kn+j-1}{n}\right]=k(n-1)+j.
\end{equation}
It follows that $\pi f_A(i)=\pi(j)=f_{L(A)}(i)$.
\end{proof}

Denote by $\gamma'$ the map $\tilde{\tau}\soc P(-)$ for $L(A)$.
It follows from Lemma ~\ref{lem:retra}
that $\gamma'(S_i')= S_{\pi f_A(i)}'$ for $1\leq i\leq n-1$.
Hence  the resolution quiver  of $L(A)$ can be obtained from the resolution quiver of $A$ just
by ``merging'' the vertices  $S_1$ and $S_n$.

Observe that $\gamma(S_n) = \gamma(S_1)$
if $A$ has no simple projective modules
and $\gamma(S_n) = S_1$ if $A$ has a simple projective module.
In particular,
the vertices $S_1$ and $S_n$ lie
on the same connected component in the resolution quiver $R(A)$ of $A$.
Then $R(A)$ and $R(L(A))$ have the same number of connected components.
It follows that they have the same number of cycles
since each connected component of a resolution quiver has a unique cycle.

Let $C$ be a cycle with vertices $S_{x_1},S_{x_2},\cdots,S_{x_s}$ in $R(A)$.
The weight $w(C)$ of $C$ is  $\sum_{k=1}^s\frac{c_{x_k}}{n(A)}$
and the size of $C$ is the number of vertices on $C$.

The following result strengthens \cite[Lemma 2.2]{She2014},
where the  connected Nakayama algebra  is required to have no simple projective modules.

\begin{lem} \label{lem:bij}
Let $A$ be a connected Nakayama algebra which is not selfinjective.
Then there exists a weight preserving bijection between the set of cycles in $R(A)$
and the set of cycles in $R(L(A))$.
Moreover, if $A$ has no simple projective modules or
the simple projective $A$-module  does not lie on a cycle in $R(A)$,
then the bijection also preserves the size.
\end{lem}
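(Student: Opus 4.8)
The plan is to construct the desired bijection explicitly using the fact, already established above via Lemma~\ref{lem:retra}, that the resolution quiver $R(L(A))$ is obtained from $R(A)$ by merging the two vertices $S_1$ and $S_n$ into the single vertex $S_1'$, while $\gamma'(S_i') = S_{\pi f_A(i)}'$ for $1 \leq i \leq n-1$. Since $S_1$ and $S_n$ lie on the same connected component of $R(A)$, and each connected component carries a unique cycle, the merging operation sends each connected component of $R(A)$ bijectively to a connected component of $R(L(A))$, and hence induces a bijection on the underlying sets of cycles. The first step is therefore to make precise, on the level of cycles, how a cycle $C$ in $R(A)$ corresponds to a cycle $C'$ in $R(L(A))$ under $\pi$: namely, $C'$ has vertex set $\{S_{\pi(x_k)}'\}$ where $C$ has vertices $S_{x_1},\dots,S_{x_s}$.

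Next I would verify the weight-preserving property. Write $n = n(A)$ and $n-1 = n(L(A))$. Using equation~\eqref{eq:2.1}, for each vertex $S_{x_k}$ on $C$ with $c_{x_k} + x_k = k_0 n + f_A(x_k)$ we have $c_{x_k}' = c_{x_k} - k_0$, so that the contribution $c_{x_k}/n$ to $w(C)$ and the contribution $c_{x_k}'/(n-1)$ to $w(C')$ differ in a controlled way. Summing around the cycle and using that the total ``carry'' $\sum_k k_0$ equals $w(C)$ (the weight being an integer, as recorded above), the telescoping identity $w(C') = (n w(C) - w(C))/(n-1) = w(C)$ should fall out. This is essentially the computation underlying \cite[Lemma 2.2]{She2014}, and I would reproduce it, taking care that it applies verbatim when $A$ has a simple projective module.

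For the final ``Moreover'' clause, the size is preserved precisely when the map $\pi$ restricted to the vertices of $C$ is injective, i.e.\ when $C$ does not pass through both $S_1$ and $S_n$. The hypothesis splits into two cases. If $A$ has no simple projective module, then $\gamma(S_n) = \gamma(S_1)$, so $S_1$ and $S_n$ have the same image and cannot both sit on a single cycle (a cycle vertex has a unique outgoing arrow, forcing two vertices with equal successor to lie off any common cycle unless they coincide); hence $\pi$ is injective on each cycle and the size is preserved. If $A$ has a simple projective module but it does not lie on a cycle in $R(A)$, I would identify which of $S_1, S_n$ is the simple projective—by the normalization convention $p(A) = c_1$, the vertex $S_1$ is the relevant one—and argue that the merged vertex therefore does not disturb any cycle, so again $\pi$ is injective on cycle vertices.

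The main obstacle I anticipate is the case analysis in the last paragraph, specifically pinning down exactly how the simple projective module sits relative to $S_1$ and $S_n$ under the normalization, and confirming that ``does not lie on a cycle'' precisely rules out the size-collapsing configuration. The weight computation is routine telescoping; the connectivity statement is already supplied; but handling the simple-projective case uniformly with the no-simple-projective case—which is the genuine strengthening over \cite[Lemma 2.2]{She2014}—will require carefully tracking the behaviour of $\gamma$ at the boundary indices $1$ and $n$.
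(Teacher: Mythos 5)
Your route is the same as the paper's: merge $S_1$ and $S_n$ via Lemma~\ref{lem:retra}, get the cycle correspondence from the equality of the numbers of connected components (each component carrying a unique cycle), telescope \eqref{eq:2.1} around a cycle for the weights, and reduce size preservation to injectivity of $\pi$ on cycle vertices. Most of this is sound, but the point you yourself single out as the crux is resolved incorrectly: under the paper's normalization the simple projective module is $S_n$, not $S_1$. This is forced by the observation stated before the lemma that $\gamma(S_n)=S_1$ when $A$ has a simple projective module (so $f_A(n)=1$, i.e.\ $c_n=1$); the rule $p(A)=c_1=c_n-1$ that you invoke is defined only in the no-simple-projective case. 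The mix-up is not cosmetic. First, \eqref{eq:2.1} and the construction of $L(A)$ presuppose this labelling: if one relabels so that the simple projective sits at position $1$, the formula $c_i'=c_i-\left[\frac{c_i+i-1}{n}\right]$ can fail to produce a connected algebra (for $\mathbf{c}(A)=(1,2,2)$ it outputs $(1,1)$). Second, and more importantly, with the correct labelling there is an arrow $S_n\to S_1$ in $R(A)$, so a cycle genuinely can pass through both merged vertices, and exactly then the merging drops one vertex. Your reading would instead give $\gamma(S_1)=\gamma(S_n)$ (as in the no-simple-projective case), from which your own argument (``two vertices with a common successor never lie on one cycle'') would yield that sizes are \emph{always} preserved; that conclusion is false --- for $\mathbf{c}(A)=(2,2,1)$ the cycle $S_1\to S_3\to S_1$ in $R(A)$ contracts to a loop in $R(L(A))$ --- and it is precisely why the ``Moreover'' clause carries a hypothesis at all.

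A second, smaller gap: the weight computation cannot be applied ``verbatim'' to a cycle through $S_n$, since \eqref{eq:2.1} is available only for $1\le i\le n-1$ and the vertex $S_n$ has no counterpart in $R(L(A))$. The paper treats this as a separate case: writing the cycle with $x_s=n$ and $x_1=1$, one has $k_s=1$ (because $c_n=1$), and summing \eqref{eq:2.1} over the surviving vertices $x_1,\dots,x_{s-1}$, the telescoping term $x_s-x_1=n-1$ supplies exactly the missing $(n-1)k_s$, giving $\sum_{i=1}^{s-1}c_{x_i}'=(n-1)\sum_{i=1}^{s}k_i$ and hence $w(C'')=w(C)$. (Equivalently: the dropped vertex contributes $c_n-k_s=0$, so your total is numerically correct, but that observation has to be made.) With these two repairs --- simple projective $=S_n$, and a separate case for cycles through $S_n$ --- your argument coincides with the paper's proof.
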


\begin{proof}
If $A$ has no simple projective modules,
then the bijection follows from \cite[Lemma 2.2]{She2014}.
We may assume that $A$ has a simple projective module $S_n$.

Let $C$ be a cycle with vertices $S_{x_1},S_{x_2},\cdots,S_{x_s}$ in $R(A)$.
Assume $x_{i+1}=f_A(x_i)$.
Here, we identify $x_{s+1}$ with $x_1$.
Let $c_{x_i}+x_i=k_i n+x_{i+1}$ with $k_i\in\bbN$.
Then
\[w(C)=\frac{\sum_{i=1}^sc_{x_i}}{n}=\sum_{i=1}^s{k_i}.\]
It follows from \eqref{eq:2.1} that for $x_i <n$, we have
$c_{x_i}^\prime+x_i= k_i(n-1)+x_{i+1}$.

There exist two cases:

Case $1$: $S_n$ does not lie on $C$.
Then $S_{x_1}',S_{x_2}',\cdots,S_{x_s}'$ form a cycle $C'$ in $R(L(A))$.

Observe that $c_{x_i}^\prime+x_i= k_i(n-1)+x_{i+1}$
for $1\leq i\leq s$.
Then
\[\sum_{i=1}^sc_{x_i}'=(n-1)\sum_{i=1}^sk_i,\]
Therefore, $w(C)=w(C')$.

Case $2$: $S_n$ lies on $C$.
Since the admissible sequence of $A$ is normalized, we have ${x_s}=n$ and ${x_1}=1$.
It follows that $S_{x_1}',S_{x_2}',\cdots,S_{x_{s-1}}'$ form a cycle $C''$ in $R(L(A))$.

Observe that $k_s=1$ and $c_{x_i}^\prime+x_i= k_i(n-1)+x_{i+1}$
for $1\leq i\leq s-1$.
Then
\[\sum_{i=1}^{s-1}c_{x_i}'=(n-1)\sum_{i=1}^{s-1}k_i+n-1=(n-1)\sum_{i=1}^{s}k_i.\]
Therefore, $w(C)=w(C'')$.

We have shown that there exists an injective weight preserving map
from the set of cycles in $R(A)$ to the set of cycles in $R(L(A))$.
Since $R(L(A))$ and $R(A)$ have the same number of  cycles,
this map is also surjective.
This finishes our proof.
\end{proof}

Recall from \cite[Theorem 3.8]{CY2014} that there exists a sequence of left retractions
\begin{equation} \label{eq:2.2}
A=A_0\overset{\eta_0}\lto A_1\overset{\eta_1}\lto A_2\lto\cdots\lto A_{r-1}\overset{\eta_{r-1}}\lto A_r
\end{equation}
such that each $A_i$ is a connected Nakayama algebra, each
$\eta_i\colon A_i\to A_{i+1}$ is a left retraction and $A_r$ is selfinjective;
the global dimension of $A$ is finite if and only if $A_r$ is simple.

For a connected Nakayama  $A$,
denote by $c(A)$  the number of cycles
and by $w(A)$ the weight of a cycle  in $R(A)$.
We mention that $w(A)$ is an integer and
all cycles in $R(A)$ have the same weight; see \cite{She2014}.

We now  prove Proposition ~\ref{prop:1.1}.

\begin{proof}[\bf Proof of Proposition ~\ref{prop:1.1}]
Applying Lemma ~\ref{lem:bij} to \eqref{eq:2.2}  repeatedly,
we obtain $c(A) = c(A_r)$ and $w(A) = w(A_r)$.
Observe that  $A_r$ is simple if and only if $c(A_r) = 1$  and $w(A_r) = 1$.
Then the global dimension of $A$ is finite if and only if $c(A) = 1$ and $w(A) = 1$.
Each connected component of a resolution quiver has a unique cycle.
Then $c(A) = 1$ if and only if  the resolution quiver of $A$ is connected.
\end{proof}

\section{Two maps on simple modules} \label{sec:3}
Let $A$ be a connected Nakayama algebra.
For an $A$-module $M$,
denote by $\pd  M$ its projective dimension and
by $\id  M$  its injective dimension.
Recall that the {\em syzygy} $\Omega(M)$ of $M$ is
the kernel of its projective cover $p\colon P(M)\lto M$.
Dually,  the {\em cosyzygy} $\Omega^{-1}(M)$ of $M$ is
the cokernel of its injective envelope $i\colon M\lto I(M)$.

It is known that $A$ has a simple projective module
if and only if it has a simple injective module.
In this case, denote by $S_\inj$
the unique simple  injective $A$-module up to isomorphism.
Then there exists map $\tilde\tau$   defined by
\[\tilde{\tau}(S) =
\begin{cases}
\tau(S), & \mbox{ if $S$ is not projective} \\
S_\inj, & \mbox{ otherwise}
\end{cases}\]
for each simple $A$-module $S$,
where $\tau$ the Auslander-Reiten translation \cite{ARS1995}.

Take a complete set $\S$ of pairwise non-isomorphic simple $A$-modules.
Recall from \cite{Gus1985,Rin2013} that
there exist two maps $\gamma,\psi\colon \S \to\S$ given by
\[\gamma(S)=\tilde\tau\soc{P(S)} \mbox{ and } \psi(S)=\tilde\tau^{-1}\top I(S).\]
for each $S$ in $\S$.
Here,  `soc' is the socle and `$\top$' is the top of a module.

The map $\gamma$ determines the resolution quiver for $A$.
Denote by $A^\op$ the opposite algebra of $A$ and by
$\gamma^\op$ the map $\tilde\tau\soc{P(-)}$ for $A^\op$.
Then $D\psi(S)=\gamma^\op(DS)$ for each $S$ in $\S$,
where $D$ is the usual dual for finitely generated $A$-modules.
Hence the resolution quiver of $A^\op$ is
isomorphic to the quiver determined by the map $\psi$.

The following terminology is taken from  \cite{Rin2013}.
A simple $A$-module $S$ is called \emph{$\gamma$-black}
provided that  $\pd S$ is not equal to $1$;
it is called \emph{$\gamma$-cyclic} provided that $\gamma^m(S)=S$ for some integer $m>0$.
Dually, one can define $\psi$-black and $\psi$-cyclic simple $A$-modules.

We need the following lemma.

\begin{lem} \label{lem:mad}
Let $S$ and $T$ be simple $A$-modules.
\begin{enumerate}
  \item If $S$ is not projective, then $\psi(T)=S$ if and only if $T$ is a composition factor in $\Omega^{2}(S)$.
  \item If $\pd  \psi(S)$ is odd, then $\pd  S$ is odd and $\pd  S \leq \pd  \psi(S)-2$.
  \item $S$ is  $\psi$-cyclic if and only if $\pd  S$ is not odd.
  \item $A$ has infinite global dimension if and only if
   the set of $\psi$-cyclic  simple $A$-modules is exactly the set of simple $A$-modules of infinite projective dimension.
\end{enumerate}
\end{lem}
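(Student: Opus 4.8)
The plan is to reduce everything to the uniserial combinatorics of $A$ and to treat the four parts in order, with (2) as the technical heart and (4) resting on a structural dichotomy.

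For (1), I would compute $\Omega^2(S)$ explicitly. Since $S$ is not projective, $\Omega(S)=\rad P(S)$ is nonzero and uniserial; putting $U=\top\Omega(S)$, the syzygy $\Omega^2(S)$ is the kernel of the projective cover $P(U)\twoheadrightarrow\Omega(S)$, hence the uniserial submodule of $P(U)$ with socle $\soc P(U)$, and a short computation identifies its top with $\gamma(S)=\tilde{\tau}\soc P(S)$ (the usual identification $\top\Omega^2(S)=\gamma(S)$). Thus the composition factors of $\Omega^2(S)$ form a consecutive arc of simples beginning at $\gamma(S)$, which is empty precisely when $\pd S=1$. Dually, $\psi$ is governed by cosyzygies (the socle-analogue of $\gamma(S)=\top\Omega^2(S)$), and I would match the two descriptions using the stated relation $D\psi(S)=\gamma^{\op}(DS)$ together with the exchange of syzygies and cosyzygies under the duality $D$ (so $\Omega^2_{A^{\op}}\circ D\cong D\circ\Omega^{-2}$); comparing composition factors then yields exactly $\psi(T)=S\iff T$ is a composition factor of $\Omega^2(S)$.

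For (2), I would feed in (1). If $\pd\psi(S)$ is odd then $\psi(S)$ is non-projective, $\Omega^2(\psi(S))\neq 0$, and $\pd\Omega^2(\psi(S))=\pd\psi(S)-2$, while by (1) the module $S$ occurs as a composition factor of the uniserial syzygy $\Omega^2(\psi(S))$. The parity of $\pd S$ and the bound $\pd S\leq\pd\psi(S)-2$ must then be extracted by bookkeeping along the radical filtration of this module, using repeatedly that in a short exact sequence $0\to A\to B\to C\to 0$ one has $\pd B=\max\{\pd A,\pd C\}$ unless $\pd C=\pd A+1$, together with the fact that each submodule $\rad^t P(U)$ of the ambient projective is itself a syzygy. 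I expect this to be the main obstacle: the bound is false for composition factors of a general module, so one genuinely needs the syzygy structure of $\Omega^2(\psi(S))$ rather than a formal estimate, and the parity and the strict drop have to be tracked simultaneously.

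For (3), the forward implication is clean. If $S$ lies on a $\psi$-cycle $S=\psi^m(S)$ and $\pd S$ were odd, then applying (2) to each arrow $\psi^{k}(S)\to\psi^{k+1}(S)$ around the cycle forces a strictly decreasing chain of odd integers $\pd S=\pd\psi^m(S)>\pd\psi^{m-1}(S)>\cdots>\pd\psi^0(S)=\pd S$, which is absurd; so $\pd S$ is not odd. For the converse I would show that every simple of non-odd projective dimension admits a $\psi$-preimage of non-odd projective dimension: by (1) the preimages of a non-projective $S$ are the composition factors of $\Omega^2(S)$, and a nonzero module of non-odd projective dimension always has such a composition factor (if $\pd=\infty$ some factor is infinite, since all-finite factors force finite dimension; if $\pd$ is even and finite then not all factors can be odd, by the estimate above, as two odd-dimensional pieces would force odd dimension of the extension). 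Iterating backwards inside the finite set of such simples forces periodicity, so $S$ is the forward image of a $\psi$-cyclic vertex and hence $\psi$-cyclic; a backward chain reaching a projective simple is absorbed by the separate fact that a simple projective $A$-module is $\psi$-cyclic, equivalently that a simple injective module is $\gamma$-cyclic.

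For (4), one implication is immediate: the functional graph of $\psi$ on the finite nonempty set of simples always contains a cycle, so there is at least one $\psi$-cyclic simple; hence if the $\psi$-cyclic simples coincide with the simples of infinite projective dimension, the latter set is nonempty and $A$ has infinite global dimension. Conversely, using $\mathrm{gl.dim}\,A=\sup_{S}\pd S$ and part (3), the $\psi$-cyclic simples are those of non-odd projective dimension, and these always contain the simples of infinite projective dimension; so the desired equality amounts to showing that, when $\mathrm{gl.dim}\,A=\infty$, no simple has even finite projective dimension. This exclusion of coexistence between infinite-dimensional and even-finite-dimensional simples is the delicate point of (4); I would derive it from the admissibility constraint $c_{i+1}\geq c_i-1$ and the constancy of the cycle weight established in Section~\ref{sec:2}, which force the finite part of the projective resolutions to be concentrated in odd dimensions once some simple has infinite projective dimension.
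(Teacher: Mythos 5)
You have the right architecture: (1) feeds (2); (2) gives the forward half of (3) via the strictly decreasing chain around a $\psi$-cycle; the converse of (3) follows by pigeonhole once every simple of non-odd projective dimension is shown to admit a $\psi$-preimage of non-odd projective dimension; and (3) reduces (4) to excluding simples of even finite projective dimension when the global dimension is infinite. Several reductions are sound, notably the observation that a module all of whose composition factors have odd projective dimension has odd projective dimension (the exceptional case of the short-exact-sequence estimate requires $\pd C=\pd A+1$, impossible for two odd values). But the steps carrying the real content are conceded rather than proven, and these are genuine gaps. In (1), the duality $D\psi(S)=\gamma^{\op}(DS)$ merely converts the claim into the equally hard statement that $\gamma$-fibers consist of the composition factors of second cosyzygies; the actual identification of $\psi^{-1}(S)$ with the arc of factors of $\Omega^{2}(S)$ requires computing the lengths of injective envelopes and an admissibility argument ($c_{j+1}\geq c_j-1$ iterated) to show the fiber cannot ``jump'' past that arc, none of which you carry out. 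In (2), bookkeeping along the radical filtration with the SES estimate cannot succeed as described. A working argument is an induction on the odd number $\pd W$, where $W=\Omega^{2}(\psi(S))=(w,l)$ is uniserial: the base case $\pd W=1$ comes from squeezing the admissibility inequalities against the equality $c_{w+l}=c_w-l$ expressing projectivity of $\Omega W$, which forces $c_{w+e+1}=c_{w+e}-1$ for all $0\leq e\leq l-1$ and hence $\pd T=1$ for every factor $T$ of $W$; the inductive step needs the further admissibility estimate that for each factor $T$ of $W$, every composition factor of $\Omega^{2}(T)$ already occurs among the composition factors of $\Omega^{2}(W)$, so that the inductive parity statement can be transported to $T$. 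Neither ingredient appears in your sketch, and without something of this kind the parity and the bound cannot be ``tracked simultaneously''.

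The most serious gap is the hard direction of (4). Saying that admissibility and constancy of the weight ``force the finite part of the projective resolutions to be concentrated in odd dimensions'' restates the goal; it is not an argument. What has to be proven is a theorem: if some simple module has even finite projective dimension, then the global dimension is finite. Concretely, $\pd S=2t<\infty$ means $\Omega^{2t}(S)$ is an entire indecomposable projective, so the syzygy orbits of $S$ and of $\top\Omega(S)$ merge into consecutive points of a single $\gamma$-orbit; deducing from this (under the standing assumption of infinite global dimension) a contradiction requires a genuine analysis of the $f_A$-dynamics, or an essential use of Proposition~\ref{prop:1.1} and the weight and size invariance from Section~\ref{sec:2} --- no such analysis is sketched, and it is not a formal consequence of (1)--(3). (The small assertion that a simple projective module is $\psi$-cyclic is also used without proof, though that one is easy.) For comparison, the paper does not reprove the lemma at all: it cites \cite[Section 3]{Mad2005} and observes that Madsen's arguments remain valid for any connected Nakayama algebra. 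Your plan is more ambitious than the paper's, and its skeleton is correct, but as written parts (1), (2) and (4) are declarations of intent at exactly the points where the mathematical work lies.
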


\begin{proof}
If $A$ has no simple projective modules,
then the arguments follow from \cite[Section 3]{Mad2005}.
We mention that they are valid for any connected Nakayama algebra.
\end{proof}

We need the following.

\begin{lem} \label{lem:rin}
Let $M$ be an indecomposable $A$-module.
Then either $\id  M \leq 1$ or $\soc\Omega^{-2}(M)=\psi(\soc M)$.
\end{lem}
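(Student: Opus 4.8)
The plan is to pass to the opposite algebra, where the combinatorially awkward map $\psi$ is replaced by $\gamma^{\op}$, which (unlike $\psi$) is read off directly from the admissible sequence, and then to compute both sides from the Loewy structure of uniserial modules. First I would record the basic well-definedness: since $M$ is indecomposable over the Nakayama algebra $A$ it is uniserial, so $\soc M$ is simple, and the cosyzygies $\Omega^{-1}(M)$ and $\Omega^{-2}(M)$ are again uniserial (being quotients of injective envelopes of uniserial modules); in particular their socles are simple and both sides of the asserted equality make sense.

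For the reduction, let $D\colon\mod A\to\mod A^{\op}$ be the usual duality. It is an anti-equivalence carrying indecomposables to indecomposables, $\soc_A$ to $\top_{A^{\op}}$, and injective envelopes to projective covers, so that $D\Omega^{-1}_A\cong\Omega_{A^{\op}}D$ and hence $D\Omega^{-2}_A\cong\Omega^2_{A^{\op}}D$; moreover $\id_A M=\pd_{A^{\op}}DM$, and $D\psi(S)=\gamma^{\op}(DS)$ as recorded above. Applying $D$ to the two simple modules $\soc\Omega^{-2}(M)$ and $\psi(\soc M)$ and using these compatibilities, the claim for $A$ becomes: for $N=DM$, either $\pd N\le 1$ or $\top\Omega^2(N)=\gamma^{\op}(\top N)$. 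As $M$ runs over indecomposable $A$-modules, $N=DM$ runs over all indecomposable $A^{\op}$-modules, and $A^{\op}$ is again a connected Nakayama algebra, so it suffices to prove the following \emph{syzygy} statement: for a connected Nakayama algebra $B$ and an indecomposable $B$-module $N$ with $\pd N>1$, one has $\top\Omega^2(N)=\gamma(\top N)$, where now $\gamma$ denotes the resolution-quiver map of $B$.

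For the syzygy statement I would use that, under the normalization that $\rad P_i$ is a factor of $P_{i+1}$, every indecomposable $B$-module has composition factors $S_t,S_{t+1},\ldots,S_{t+\ell-1}$ from top to bottom, so it is determined by its top $S_t$ and its length $\ell$. For such an $N$ the projective cover is $P_t$, and $\Omega(N)=\rad^{\ell}P_t$ is the uniserial module with top $S_{t+\ell}$ and length $c_t-\ell$; its projective cover is $P_{t+\ell}$, so $\Omega^2(N)=\rad^{\,c_t-\ell}P_{t+\ell}$ has top $S_{(t+\ell)+(c_t-\ell)}=S_{t+c_t}$. By the description of the resolution quiver via $f_B$, namely $\gamma(S_t)=S_{f_B(t)}$ with $f_B(t)\equiv t+c_t\pmod n$, this top is exactly $\gamma(S_t)=\gamma(\top N)$. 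Finally the hypothesis is precisely what makes $\top\Omega^2(N)$ defined: $\Omega(N)=\rad^{\ell}P_t$ is nonzero and non-projective exactly when $1\le c_t-\ell<c_{t+\ell}$, which is the condition $\pd N>1$, and this is the same as $\rad^{\,c_t-\ell}P_{t+\ell}\ne 0$.

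The computation in the last paragraph is routine bookkeeping; the one genuinely delicate point is the reduction step, where I must match $\Omega^{-2}$, $\soc$, $\id$ and $\psi$ on the $A$-side against $\Omega^{2}$, $\top$, $\pd$ and $\gamma^{\op}$ on the $A^{\op}$-side, and in particular check that the hypotheses $\id_A M\le 1$ and $\pd_{A^{\op}}DM\le 1$ correspond. The identity $D\psi=\gamma^{\op}D$ supplied in the text is exactly what makes this tractable: it lets me replace $\psi$ (defined through $\tilde\tau^{-1}$ and injective tops) by $\gamma^{\op}$, which comes straight from $f_{A^{\op}}$, and thereby avoids computing $\soc\Omega^{-2}(M)$ and $\psi(\soc M)$ separately on the $A$-side, where one would additionally have to control the action of $\tilde\tau$ on simple modules.
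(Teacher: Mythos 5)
Your proof is correct. Its opening move---applying the duality $D$ so that the statement about $\soc$, $\Omega^{-2}$, $\id$ and $\psi$ becomes a statement about $\top$, $\Omega^{2}$, $\pd$ and $\gamma^{\op}$---is exactly the reduction the paper has in mind: the paper's entire proof is the one-line remark that the lemma is dual to \cite[Lemma 2]{Rin2013} (see also \cite{Gus1985}). The difference is what happens next. The paper stops there and outsources the syzygy statement to Ringel, whereas you prove it from scratch: writing an indecomposable module $N$ over a connected Nakayama algebra $B$ as the uniserial module with top $S_t$ and length $\ell$, computing $\Omega(N)=\rad^{\ell}P_t$ and $\Omega^{2}(N)=\rad^{\,c_t-\ell}P_{t+\ell}$, and matching $\top\Omega^{2}(N)=S_{t+c_t}$ against the description $\gamma(S_t)=S_{f_B(t)}$ with $f_B(t)\equiv t+c_t \pmod{n}$, which the paper records in Section~2 following \cite{Gus1985}. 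Your bookkeeping of when $\Omega^{2}(N)\neq 0$, namely $1\le c_t-\ell<c_{t+\ell}$, is also the right way to see that the hypothesis $\pd N>1$ is precisely what keeps both sides of the equality defined, including the degenerate cases $N$ projective and $\Omega(N)$ projective. What your route buys is self-containedness: the lemma is reduced to two facts already quoted in the paper (uniseriality of indecomposables and Gustafson's formula for $f$), rather than to an external reference; what the paper's route buys is brevity, since essentially this same uniserial computation is carried out once in \cite{Rin2013} and then reused in dual form here.
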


\begin{proof}
This is dual to \cite[Lemma 2]{Rin2013}; see also \cite{Gus1985}.
\end{proof}

The following lemma provides a connection
between the maps $\gamma$ and $\psi$.

\begin{lem} \label{lem:bla-mod}
A simple $A$-module $S$  is $\gamma$-black if and only if $\psi\gamma(S) = S$.
\end{lem}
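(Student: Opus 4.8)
The plan is to translate the condition $\psi\gamma(S)=S$ into a statement about the composition factors of $\Omega^2(S)$ by means of Madsen's description in Lemma~\ref{lem:mad}(1), and then to control $\Omega^2(S)$ through the syzygy identity $\top\Omega^2(S)=\gamma(S)$, which is the non-dual counterpart of Lemma~\ref{lem:rin}. Since Lemma~\ref{lem:mad}(1) requires $S$ to be non-projective, I would first dispose of the simple projective module by a direct computation and then run the two lemmas in the remaining case.

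Assume first that $S$ is not projective. Then $S$ being $\gamma$-black, i.e. $\pd S\neq 1$, is equivalent to $\pd S\geq 2$ (allowing $\pd S=\infty$), and hence to $\Omega^2(S)\neq 0$. If $\pd S=1$, then $\Omega(S)$ is projective, so $\Omega^2(S)=0$; by Lemma~\ref{lem:mad}(1) no simple module $T$ can satisfy $\psi(T)=S$, and in particular $\psi\gamma(S)\neq S$. If instead $\pd S\geq 2$, I would apply the non-dual form of Lemma~\ref{lem:rin}, that is \cite[Lemma 2]{Rin2013}, to the indecomposable module $S$: as $\pd S>1$, it gives $\top\Omega^2(S)=\gamma(\top S)=\gamma(S)$. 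Thus $\gamma(S)$ occurs as a composition factor of $\Omega^2(S)$, and Lemma~\ref{lem:mad}(1) yields $\psi\gamma(S)=S$. These two subcases together settle the equivalence for non-projective $S$.

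It remains to treat a simple projective module $S$, which is automatically $\gamma$-black since $\pd S=0\neq 1$; here I would verify $\psi\gamma(S)=S$ by hand. In this situation $A$ has a simple injective module $S_\inj$, and from $P(S)=S$ we obtain $\gamma(S)=\tilde\tau\soc P(S)=\tilde\tau(S)=S_\inj$, using that $S$ is projective. Since $S_\inj$ is simple and injective, $I(S_\inj)=S_\inj$ and $\top I(S_\inj)=S_\inj$, so $\psi\gamma(S)=\tilde\tau^{-1}(S_\inj)$. Finally $\tilde\tau^{-1}(S_\inj)=S$, because $\tilde\tau$ sends the simple projective module $S$ to $S_\inj$ and is a bijection on the set of simple $A$-modules; this gives $\psi\gamma(S)=S$.

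The syzygy bookkeeping and the computation of $\tilde\tau^{-1}(S_\inj)$ are routine. The main point requiring care, which I regard as the crux, is the use of the non-dual form of Lemma~\ref{lem:rin}: since the lemma is recorded here only in its $\id$--$\Omega^{-2}$--$\psi$ version, I would either cite \cite[Lemma 2]{Rin2013} directly for $\top\Omega^2(S)=\gamma(S)$, or obtain it by applying Lemma~\ref{lem:rin} to $A^\op$ and translating through the duality $D\psi(S)=\gamma^\op(DS)$ already recorded in the text. The only other delicate point is matching the simple projective module, excluded by the hypothesis of Lemma~\ref{lem:mad}(1), against the non-projective case.
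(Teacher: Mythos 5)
Your proof is correct and takes essentially the same route as the paper's: the projective case is handled directly, and for non-projective $S$ both directions come from combining Ringel's Lemma 2 (the non-dual form of Lemma~\ref{lem:rin}) with Lemma~\ref{lem:mad}(1), exactly as the paper does. Your extra care about the dualization is warranted — the paper cites Lemma~\ref{lem:rin} even though it is stated in the $\id$/$\Omega^{-2}$/$\psi$ form, and even writes $\soc\Omega^{2}(S)$ where $\top\Omega^{2}(S)$ is the accurate statement — but this changes nothing, since the argument only needs that $\gamma(S)$ is a composition factor of $\Omega^{2}(S)$.
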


\begin{proof}
Suppose that $S$ is a $\gamma$-black simple $A$-module.
If $S$ is projective, then by definition $\psi\gamma(S)=S$.
If $\pd  S \geq 2$,
then  by Lemma ~\ref{lem:rin} we have $\gamma(S)=\soc\Omega^{2}(S)$
It follows from Lemma ~\ref{lem:mad}(1) that $\psi\gamma(S)=S$.

Suppose  $\psi\gamma(S)=S$.
If $S$ is projective, then $S$ is  $\gamma$-black.
If $S$ is not projective,
then it follows from Lemma ~\ref{lem:mad}(1) that
$\gamma(S)$ is a composition factor in $\Omega^{2}(S)$.
In particular,  $\Omega^{2}(S)$ is nonzero and thus $\pd S\geq 2$.
\end{proof}

Recall that a cycle in the resolution quiver of $A$ is called black  provided that
each vertex on this cycle is $\gamma$-black.

We have the following observation.

\begin{prop} \label{prop:bla-cyc}
Let $C$ be a cycle in the resolution quiver of $A$.
Then the following statements are equivalent.
\begin{enumerate}
  \item The vertices of $C$ form a $\psi$-cycle.
  \item Each vertex on $C$ is $\psi$-cyclic.
  \item $C$ is a black cycle.
\end{enumerate}
\end{prop}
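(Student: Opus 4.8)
The plan is to prove the three statements equivalent by establishing the cyclic chain $(3)\Rightarrow(1)\Rightarrow(2)\Rightarrow(3)$, using Lemma~\ref{lem:bla-mod} as the crucial bridge between the map $\gamma$ and the map $\psi$. The key structural observation is that along the cycle $C$ the map $\gamma$ cyclically permutes the vertices, so $\gamma$ restricted to $C$ is a bijection from the vertex set of $C$ to itself; this lets me freely compose $\gamma$ and $\psi$ and track where vertices go.

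For $(3)\Rightarrow(1)$, suppose $C$ is black, so every vertex $S$ on $C$ satisfies $\psi\gamma(S)=S$ by Lemma~\ref{lem:bla-mod}. Write the vertices of $C$ in cyclic order as $S_{x_1},\dots,S_{x_s}$ with $\gamma(S_{x_i})=S_{x_{i+1}}$ (indices mod $s$). Applying $\psi$ to $\gamma(S_{x_i})=S_{x_{i+1}}$ and using $\psi\gamma(S_{x_i})=S_{x_i}$, I get $\psi(S_{x_{i+1}})=S_{x_i}$. Thus $\psi$ runs backwards around exactly the same vertices, so these vertices form a $\psi$-cycle. For $(1)\Rightarrow(2)$, if the vertices of $C$ form a $\psi$-cycle then by definition each such vertex $S$ satisfies $\psi^m(S)=S$ for the size $m$ of that cycle, which is precisely the statement that $S$ is $\psi$-cyclic.

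The more substantial direction is $(2)\Rightarrow(3)$. Here I assume each vertex on $C$ is $\psi$-cyclic and must show each is $\gamma$-black, i.e.\ $\psi\gamma(S)=S$ for every vertex $S$ of $C$. By Lemma~\ref{lem:mad}(3), being $\psi$-cyclic is equivalent to $\pd S$ not being odd, so every vertex of $C$ has projective dimension that is either even (possibly zero) or infinite; in particular no vertex has $\pd S=1$, which already shows $C$ is black by the definition of a black cycle. I expect this last step to be the main obstacle, because I need to be careful that the definition of black cycle—every vertex is $\gamma$-black, meaning $\pd S\neq 1$—is exactly what the condition ``$\pd S$ is not odd'' delivers for each vertex, and I must confirm via Lemma~\ref{lem:mad}(3) that $\psi$-cyclic is genuinely equivalent to $\pd S$ not odd rather than merely implied by it. Once this equivalence is in hand the implication is immediate, so the real work is simply invoking Lemma~\ref{lem:mad}(3) correctly and matching it against the definition.

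Assembling these, the cycle $(3)\Rightarrow(1)\Rightarrow(2)\Rightarrow(3)$ closes the equivalence. The only genuinely delicate point is the bookkeeping in $(3)\Rightarrow(1)$, where I must verify that $\psi$ inverts $\gamma$ on the cycle so that the $\gamma$-cycle and the $\psi$-cycle have identical vertex sets; everything else reduces to direct applications of Lemma~\ref{lem:bla-mod} and Lemma~\ref{lem:mad}(3) together with the definitions of $\psi$-cyclic and black cycle.
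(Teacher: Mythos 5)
Your proof is correct, and it follows the same cyclic chain of implications as the paper, with the same two key lemmas in the same roles: Lemma~\ref{lem:bla-mod} drives $(3)\Rightarrow(1)$ and Lemma~\ref{lem:mad}(3) drives $(2)\Rightarrow(3)$. The one genuine difference is inside $(3)\Rightarrow(1)$. The paper, after deriving $\psi(S_{i+1})=\psi\gamma(S_i)=S_i$ from blackness, does not immediately conclude; it first proves an auxiliary claim that each $\pd S_i$ is not odd, by a descending induction: if some $\pd S_i$ were odd, Lemma~\ref{lem:mad}(2) would force $\pd S_{i+k}\leq \pd S_i - 2k$ for all $k$, a contradiction. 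Only then does it assemble the $\psi$-cycle. You skip this entirely, observing that the relations $\psi(S_{x_{i+1}})=S_{x_i}$, taken with cyclic indices (in particular $\psi(S_{x_1})=S_{x_s}$, which comes from applying Lemma~\ref{lem:bla-mod} to the last vertex $S_{x_s}$), already exhibit $\psi$ as a cyclic permutation of the distinct vertices of $C$ run backwards --- which is precisely the definition of a $\psi$-cycle. This shortcut is valid: the paper's auxiliary claim is logically redundant for statement (1), since the $\psi$-cyclicity of each vertex (and hence the parity information on projective dimensions) follows afterwards from (1)$\Rightarrow$(2) anyway. So your argument is a mild but real simplification; it avoids Lemma~\ref{lem:mad}(2) altogether, at no cost to the conclusion.
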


\begin{proof}
(1) $\implies$ (2) This is obvious.

(2) $\implies$ (3) By Lemma ~\ref{lem:mad}(3)
each $\psi$-cyclic simple $A$-module is $\gamma$-black.

(3) $\implies$ (1)
Assume that the vertices of $C$
are $S_1,S_2,\cdots,S_m$ with $S_{i+1}=\gamma(S_i)$ for  $i\geq 1$.
Here, we identify $S_{m+i}$ with $S_i$.

Since $C$ is a black cycle,
each $S_i$ is $\gamma$-black and
each $\id S_i$ is not odd for $i \geq 1$.
It follows from Lemma ~\ref{lem:bla-mod} that
$\psi\gamma(S_i)=\psi(S_{i+1})=S_i$.

We claim that each $\pd  S_i$ is not odd and thus
each $S_i$ is  $\psi$-cyclic by Lemma ~\ref{lem:mad}(3).
Since $\psi(S_{i+1})=S_i$ for $i\geq 1$,
it follows that $S_m,S_{m-1},\cdots,S_1$ form a $\psi$-cycle.

For the claim,
suppose to the contrary that  the projective dimension of some $S_i$ is odd.
Then  by Lemma ~\ref{lem:mad}(2) $S_{i+1}$ is odd
and $\pd S_{i+1} \leq \pd S_i-2$.
By induction, we infer that $\pd S_{i+k} \leq \pd  S_i-2k$ for $k \geq 1$.
This is a contradiction.
\end{proof}

Suppose that global dimension of $A$ is infinite.
In particular, $A$ has no simple projective modules.
The following lemma describes  projective $A$-modules
of finite injective dimension and
injective $A$-modules of finite projective dimension.

\begin{lem} \label{lem:proj-inj}
Let $A$ be a connected Nakayama algebra with infinite global dimension, and let $P$ be an indecomposable projective $A$-module and $I$ be  an indecomposable injective $A$-module.
\begin{enumerate}
  \item The injective dimension of $P$ is infinite if and only if
  $P$ is a nontrivial submodule of $P(S)$ with $S$ a $\gamma$-cyclic simple $A$-module.
  \item The projective dimension of $I$ is infinite if and only if
  $I$ is a nontrivial factor module of $I(S)$ with $S$ a $\psi$-cyclic simple $A$-module.
\end{enumerate}
\end{lem}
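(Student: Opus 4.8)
\emph{The plan is to} prove statement (1) directly and then obtain (2) from it by duality. The two parts are exchanged by the standard duality $D\colon\mod A\to\mod A^{\op}$: it sends indecomposable projectives to indecomposable injectives, submodules to factor modules, and interchanges $\pd$ and $\id$, while the identity $D\psi(S)=\gamma^{\op}(DS)$ shows that $S$ is $\psi$-cyclic over $A$ exactly when $DS$ is $\gamma^{\op}$-cyclic over $A^{\op}$. As $A^{\op}$ is again a connected Nakayama algebra of infinite global dimension, part (1) applied to $A^{\op}$ becomes part (2) for $A$. I would also use at the outset that infinite global dimension forces $A$ to have no simple projective (hence no simple injective) module, so that $\tilde\tau=\tau$ and $\gamma(S)=\tau\soc P(S)$ throughout.

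\emph{First I would} reformulate the right-hand side of (1) combinatorially in terms of the admissible sequence $\mathbf c(A)=(c_1,\dots,c_n)$. Every nonzero submodule of the uniserial module $P_i$ has the form $\rad^k P_i$ and shares the socle of $P_i$, and $\rad^k P_i$ is projective precisely when $\rad^k P_i\cong P_t$ with $t\equiv i+k$ and $c_t=c_i-k$. A short computation with $f_A(i)\equiv c_i+i\pmod n$ then gives that $P_j$ is a nontrivial submodule of $P_i$ if and only if $\gamma(S_i)=\gamma(S_j)$ and $c_i>c_j$. Since the $\gamma$-cyclic preimages of a given vertex are exactly its cyclic predecessor (when that vertex is itself $\gamma$-cyclic), the right-hand side of (1) amounts to the condition that $\gamma(S_j)$ be $\gamma$-cyclic and that the cyclic $\gamma$-predecessor $S_{i_0}$ of $\gamma(S_j)$ satisfy $c_{i_0}>c_j$.

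\emph{The heart of the argument, and where I expect the main difficulty,} is to match this with $\id P_j=\infty$. I would compute the minimal injective coresolution of $P_j$ using that cosyzygies of indecomposables are again indecomposable and uniserial; by Lemma~\ref{lem:rin} the socles of the even cosyzygies run through the forward $\psi$-orbit of $\soc P_j$, and the associated lengths obey the recursion coming from the admissible sequence. The coresolution fails to terminate exactly when this process never produces an injective term, and the point is that this non-termination is governed jointly by $\gamma$-cyclicity (via the $\id$-analogue of Lemma~\ref{lem:mad}(4), so that a simple has infinite injective dimension iff it is $\gamma$-cyclic) and by the length comparison $c_{i_0}>c_j$ that decides whether the coresolution is cut off early. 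Here I would invoke Lemma~\ref{lem:bla-mod} and Proposition~\ref{prop:bla-cyc} to reconcile the $\psi$-orbit of socles produced by Lemma~\ref{lem:rin} with the $\gamma$-cyclic data appearing in the reformulated condition, via $\gamma(S)=\tau\soc P(S)$ and the fact that all nonzero submodules of $P(S)$ share its socle. The delicate step is precisely this length bookkeeping---verifying that the coresolution of $P_j$ is infinite if and only if $\gamma(S_j)$ is $\gamma$-cyclic with $c_{i_0}>c_j$---together with the degenerate case $S_{i_0}=S_j$; once (1) is settled in this form, (2) follows by the duality of the first paragraph.
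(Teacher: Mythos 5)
Your reduction of (2) to (1) by duality is exactly the paper's route, and your combinatorial reformulation of the right-hand side of (1) (that $P_j$ is a nontrivial submodule of $P_i$ iff $\gamma(S_i)=\gamma(S_j)$ and $c_i>c_j$, hence the condition on the cyclic predecessor $S_{i_0}$ of $\gamma(S_j)$) is correct. But the proposal has a genuine gap at what you yourself call its heart: the equivalence between $\id P_j=\infty$ and the reformulated condition is never actually proved. You describe a plan --- follow the minimal injective coresolution of $P_j$, track the $\psi$-orbit of socles via Lemma~\ref{lem:rin}, and do ``length bookkeeping'' against the admissible sequence --- but you explicitly defer this as ``the delicate step'' and give no indication of how the bookkeeping closes. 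That step is not routine: deciding when the coresolution terminates requires controlling the socle \emph{and} the length of every cosyzygy simultaneously, and reconciling the $\psi$-orbit data from Lemma~\ref{lem:rin} with the $\gamma$-cyclicity condition you want to reach is precisely the content of the lemma, not an afterthought.

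The paper avoids all of this by looking only at the \emph{first} cosyzygy $\Omega^{-1}(P)$. Its key observations are: (a) the composition factors $T$ of $\Omega^{-1}(P)$ are exactly the simples for which $P$ is a nontrivial submodule of $P(T)$, and each such $T$ satisfies $\soc P(T)=\soc P$, hence all of them have the same image under $\gamma$; since $\gamma$ is injective on $\gamma$-cyclic vertices, at most one composition factor of $\Omega^{-1}(P)$ is $\gamma$-cyclic; (b) by the injective-dimension analogue of Lemma~\ref{lem:mad}(4), a simple module has infinite injective dimension iff it is $\gamma$-cyclic; (c) $\id P$ is infinite iff $\id\Omega^{-1}(P)$ is infinite, iff some composition factor of $\Omega^{-1}(P)$ has infinite injective dimension --- the last equivalence using that, by (a) and (b), all but at most one composition factor have finite injective dimension. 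This reduces (1) to statements about simples and a single cosyzygy, with no induction along the coresolution and no length recursion. If you want to complete your proposal, the statement you must actually prove is the combination of (a) and (c); once those are in hand, the coresolution bookkeeping you planned becomes unnecessary.
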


\begin{proof}
(1) ``$\impliedby$"
If the injective dimension of $P$ is infinite,
then the injective dimension of its cosyzygy $\Omega^{-1}(P)$ is also finite.
It follows that  $\Omega^{-1}(P)$ contains at least one composition factor
with infinite injective dimension.

We claim that $\Omega^{-1}(P)$ contains at most one composition factor which is $\gamma$-cyclic.
It follows from Lemma ~\ref{lem:mad}(4) that
$\Omega^{-1}(P)$ contains precisely one composition factor which is $\gamma$-cyclic.
Since $P$ is a nontrivial submodule of $P(T)$ for each composition factor $T$ in $\Omega^{-1}(P)$,
we infer that $P$ is a nontrivial submodule of $P(S)$ with $S$ a $\gamma$-cyclic simple $A$-module.

For the claim, observe that $\soc P(T) = \soc P$ and $\gamma (T) = \gamma (\top P)$
for each composition factor $T$ in $\Omega^{-1}(P)$.
Since the composition factors in $\Omega^{-1}(P)$ have the same image under the map $\gamma$,
at most one of them is $\gamma$-cyclic.

``$\implies$"
Suppose that $P$ is a nontrivial submodule of $P(S)$ with $S$ a $\gamma$-cyclic simple $A$-module.
By the previous claim,
one can show that there exists only one composition factor in $\Omega^{-1}(P)$
which  is $\gamma$-cyclic, namely $S$.
Since the injective dimension of $S$ is infinite by Lemma ~\ref{lem:mad}(4),
the injective dimension of $\Omega^{-1}(P)$ is infinite.
Then the injective dimension of $P$ is infinite.

(2) This is dual to (1).
\end{proof}

Recall that an Artin algebra $A$ is called a {\em Gorenstein algebra} if  both
$\id A$ and $\pd DA$ are finite.
Here, $D$ is the usual dual for finitely generated $A$-modules.

We are now ready to prove Proposition ~\ref{prop:1.2}.
Indeed, using the maps $\gamma$ and $\psi$ as above,
several characterizations are given to decide whether
a connected Nakayama algebra with infinite global dimension is Gorenstein or not.
It strengthens \cite[Proposition  5(a)]{Rin2013}.

\begin{prop} \label{prop:gor}
Let $A$ be a connected Nakayama algebra  with infinite global dimension.
Then the following statements are equivalent.
\begin{enumerate}
  \item $A$ is a Gorenstein algebra.
  \item Each $\gamma$-cyclic simple $A$-module is $\gamma$-black.
  \item Each $\psi$-cyclic simple $A$-module is $\psi$-black.
  \item The set of $\gamma$-cyclic simple $A$-modules is exactly the set of $\psi$-cyclic simple $A$-modules.
\end{enumerate}
\end{prop}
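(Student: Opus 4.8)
The plan is to prove the cyclic equivalence $(1) \Leftrightarrow (2) \Leftrightarrow (3) \Leftrightarrow (4)$ by exploiting the self-dual structure of the statement together with the characterization of infinite-injective-dimension projectives and infinite-projective-dimension injectives from Lemma~\ref{lem:proj-inj}. Since $A$ has infinite global dimension, it has no simple projective modules, so $\tilde\tau = \tau$ and the two maps $\gamma, \psi$ are genuine dual companions: the resolution quiver of $A^{\op}$ is determined by $\psi$. This duality means that $(3)$ is exactly $(2)$ applied to $A^{\op}$, and likewise the Gorenstein condition is self-dual (symmetric in $\id A$ and $\pd DA$). I would therefore aim to prove $(1) \Leftrightarrow (2)$ directly, obtain $(1) \Leftrightarrow (3)$ by passing to $A^{\op}$, and treat $(4)$ as a bridge.

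\smallskip

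\textbf{First,} I would establish $(1) \Leftrightarrow (2)$. Recall that $A$ is Gorenstein iff $\id A < \infty$ and $\pd DA < \infty$. For a Nakayama algebra, $\id A$ is finite iff every indecomposable projective has finite injective dimension. By Lemma~\ref{lem:proj-inj}(1), an indecomposable projective $P$ has infinite injective dimension precisely when $P$ is a nontrivial submodule of $P(S)$ for some $\gamma$-cyclic simple $S$. The key observation is that such a nontrivial submodule exists exactly when $\pd S = 1$ fails to hold in the right way — more precisely, $P(S)$ itself has a nontrivial proper submodule that is projective iff the radical $\rad P(S)$ contains a projective summand, which ties back to whether $S$ is $\gamma$-black (i.e. $\pd S \neq 1$). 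So $\id A < \infty$ should reduce to: no $\gamma$-cyclic simple is non-$\gamma$-black, i.e. every $\gamma$-cyclic simple is $\gamma$-black, which is exactly $(2)$. The dual argument via Lemma~\ref{lem:proj-inj}(2) gives $\pd DA < \infty$ iff every $\psi$-cyclic simple is $\psi$-black, namely $(3)$. The upshot is $(1) \Leftrightarrow \big((2) \wedge (3)\big)$, and I would still need that $(2)$ alone implies $(3)$ (and conversely), which is where the structural link from Proposition~\ref{prop:bla-cyc} enters.

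\smallskip

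\textbf{Next,} I would use Proposition~\ref{prop:bla-cyc} to identify the $\gamma$-black cycles with the $\psi$-cycles. That proposition tells us that for any cycle $C$ in the resolution quiver, its vertices form a $\psi$-cycle iff $C$ is black iff every vertex is $\psi$-cyclic. This is precisely the mechanism converting a statement about $\gamma$ into one about $\psi$. For $(2) \Rightarrow (4)$: if every $\gamma$-cyclic simple is $\gamma$-black, then each cycle in $R(A)$ is a black cycle, so by Proposition~\ref{prop:bla-cyc} its vertices form a $\psi$-cycle; running this over all cycles shows the $\gamma$-cyclic simples (which are exactly the vertices lying on cycles of $R(A)$) coincide with the $\psi$-cyclic simples, giving $(4)$. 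Conversely $(4) \Rightarrow (2)$ and $(4) \Rightarrow (3)$ should follow symmetrically, since $(4)$ is manifestly self-dual and forces each set of cyclic simples to consist of black cyclic simples by Lemma~\ref{lem:mad}(3) (each $\psi$-cyclic simple is $\gamma$-black). Combining, $(2) \Leftrightarrow (3) \Leftrightarrow (4)$, and together with $(1) \Leftrightarrow (2) \wedge (3)$ this closes the cycle of equivalences.

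\smallskip

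\textbf{The main obstacle} I anticipate is the precise reduction in the $(1) \Leftrightarrow (2)$ step — specifically, translating ``every indecomposable projective has finite injective dimension'' into the purely combinatorial condition ``every $\gamma$-cyclic simple is $\gamma$-black.'' Lemma~\ref{lem:proj-inj}(1) characterizes the projectives of infinite injective dimension as nontrivial submodules of $P(S)$ with $S$ being $\gamma$-cyclic, but I must verify that such a nontrivial projective submodule genuinely exists exactly when some $\gamma$-cyclic simple fails to be $\gamma$-black. This requires a careful look at the composition series of $P(S)$ and when $\rad^j P(S)$ is projective, i.e. at the interplay between the admissible sequence and the condition $\pd S = 1$. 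Once this local analysis is pinned down, the rest of the argument is bookkeeping driven by Proposition~\ref{prop:bla-cyc} and the duality $D\psi(S) = \gamma^{\op}(DS)$.
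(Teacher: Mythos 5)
Your overall architecture is essentially the paper's: Lemma~\ref{lem:proj-inj} links the Gorenstein condition to statements (2) and (3), Proposition~\ref{prop:bla-cyc} and Lemma~\ref{lem:mad}(3) handle the combinatorial equivalences, and opposite-algebra duality disposes of (3). However, there is a genuine gap in your step (2)~$\Rightarrow$~(4). From Proposition~\ref{prop:bla-cyc} you correctly obtain that the vertices of each (black) cycle of $R(A)$ form a $\psi$-cycle, hence the inclusion of the set of $\gamma$-cyclic simples into the set of $\psi$-cyclic simples; but ``running this over all cycles'' does not show the two sets \emph{coincide}, since nothing you invoke rules out $\psi$-cycles containing no $\gamma$-cyclic vertex at all. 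The paper closes exactly this hole by citing \cite[Corollary 2.3]{She2014}: the two sets have the same finite cardinality, so the inclusion forces equality. (Alternatively, one could use that $A$ and $A^{\op}$ have the same number of cycles, cf.\ Remark~\ref{rmk:det}(2), together with the injectivity of the assignment sending a black $\gamma$-cycle to the $\psi$-cycle on its vertices.) The gap is not cosmetic: a $\psi$-cyclic simple that is not $\gamma$-cyclic has odd injective dimension by the dual of Lemma~\ref{lem:mad}(3), possibly equal to $1$, so (3) can fail as far as your argument shows; then the chain (2)~$\Rightarrow$~(4)~$\Rightarrow$~(3) breaks, and since your route to (1) requires (2)~$\wedge$~(3), your cycle of equivalences never closes.

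The ``main obstacle'' you flag --- that for a non-projective simple $S$ the module $P(S)$ has a nontrivial projective submodule if and only if $\pd S=1$ --- is indeed needed (both by you and, implicitly, by the paper in its step (2)${}+{}$(3)~$\Rightarrow$~(1)), and it is true, with a short proof you should supply: the nonzero submodules of the uniserial module $P_i$ are the $\rad^{j}P_i$, and $\rad^{j}P_i$ is projective exactly when $c_{i+j}=c_i-j$ in the admissible sequence; since $c_{k+1}\geq c_k-1$ for all $k$, this equality forces $c_{i+1}=c_i-1$, i.e.\ $\rad P_i$ itself is projective and $\pd S_i\leq 1$. Modulo these two points --- the unproved reverse inclusion (which requires the counting input from \cite{She2014}) and this unfinished local analysis --- your decomposition of the equivalences matches the paper's proof.
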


\begin{proof}
(1) $\implies$ (2).
Let $S$ be a $\gamma$-cyclic simple $A$-module.
By Lemma ~\ref{lem:proj-inj}(1) the projective cover of $S$ has no nontrivial projective submodules.
Then the projective dimension of $S$ greater than $1$ and thus $S$ is $\gamma$-black.

Similarly, we have (1) $\implies$ (3).

(2) $\implies$ (4).
Following Proposition ~\ref{prop:bla-cyc} the set of $\gamma$-cyclic simple $A$-modules
is contained in the set of $\psi$-cyclic simple $A$-modules.
By \cite[Corollary 2.3]{She2014} the two sets have the same finite number of modules.
Then they must coincide.

(4) $\implies$ (2).
Since each $\psi$-cyclic simple $A$-module is $\gamma$-black,
it follows that each $\gamma$-cyclic simple $A$-module is $\gamma$-black.

Similarly, one can prove that (3) and (4) are equivalent.

(2) + (3) $\implies$ (1).
By Lemma ~\ref{lem:proj-inj} all projective $A$-modules have finite injective dimension and
all injective $A$-modules have finite projective dimension.
Then $A$ is a Gorenstein algebra.
\end{proof}

We mention that the global dimension condition in Proposition ~\ref{prop:gor} cannot be omitted;
see \cite[Example 2]{Rin2013}.

\section{Cartan matrices and resolution quivers} \label{sec:4}
In this section, we study the connection between the Cartan matrix and the resolution quiver
for a fixed connected  Nakayama algebra $A$.

Denote by $n = n(A)$ the number of non-isomorphic simple $A$-modules.
Take a complete set $\{S_1,S_2,\cdots,S_n\}$ of pairwise non-isomorphic simple $A$-modules.
The Cartan matrix $\bfC_A=(c_{ij})$ of $A$ is an $n \times n$ matrix,
where $c_{ij}$ is the number of copies of $S_i$
appearing in a composition series for the projective cover of $S_j$.

Recall that two $n\times n$ integer matrix $\bfX$ and $\bfY$ are {\em $\bbZ$-equivalent}
provided that there exist invertible integer matrices $\bfP$ and $\bfQ$ such that $\bfP\bfX\bfQ = \bfY$.
For an $n\times n$ integer matrix, its {\em Smith normal form} is the $n\times n$ diagonal integer matrix
\[\diag(d_1,d_2,\cdots,d_r,0,\cdots,0)\]
where $d_1,d_2,\cdots,d_r\in \bbN^+$ and $d_i$ divides $d_{i+1}$ for $1\leq i \leq r-1$.
The Smith normal form always exists and is unique;
it is $\bbZ$-equivalent to the original matrix.

Denote by $c(A)$ the number of cycles and
by $w(A)$ the weight of a cycle in the resolution quiver of $A$.

The following result provides a connection between the Cartan matrix
and the resolution quiver for $A$.

\begin{prop} \label{prop:smith}
Let $A$ be a connected Nakayama algebra.
Then the Smith normal form of its Cartan matrix $\bfC_A$ is the diagonal matrix
\[\diag(1,\cdots,1,w(A),0,\cdots,0)\] with $c(A)-1$ zeros on the diagonal.
In particular, the rank of $\bfC_A$ is $n(A)+1-c(A)$.
\end{prop}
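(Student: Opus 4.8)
The plan is to compute the Smith normal form of $\bfC_A$ by induction on $n(A)$, using the left-retraction sequence \eqref{eq:2.2} as the engine, exactly as in the proof of Proposition~\ref{prop:1.1}. The base case is the selfinjective Nakayama algebra $A_r$ appearing at the end of the retraction sequence. For a connected selfinjective Nakayama algebra with admissible sequence $(c,c,\cdots,c)$ the Cartan matrix is a circulant whose Smith normal form I would compute directly: its rank is $n-c(A)+1$ (one cycle, so $c(A)=1$ when it is not simple, but in general the resolution quiver of a selfinjective algebra may have several cycles all of the same size $n/c(A)$), the single nontrivial invariant factor equals the weight $w(A)$, and the number of zeros is $c(A)-1$. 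This matches the claimed form, so the base case is a self-contained circulant-matrix calculation.

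For the inductive step, the key point is to relate $\bfC_A$ and $\bfC_{L(A)}$ through $\bbZ$-equivalence. I would write down how the projective covers of $L(A)$ are built from those of $A$ under the merging of $S_1$ and $S_n$ described after Lemma~\ref{lem:retra}: deleting the $n$-th simple and recording $c_i' = c_i - [\tfrac{c_i+i-1}{n}]$. Concretely, I expect that $\bfC_{L(A)}$ can be obtained from $\bfC_A$ by a sequence of elementary integer row and column operations together with deletion of one row and one column, i.e. there exist invertible integer matrices $\bfP,\bfQ$ and an embedding of $\bfC_{L(A)}$ as a corner of $\bfP\bfC_A\bfQ$, in such a way that the extra row/column contributes exactly one unit invariant factor $1$ to the Smith normal form of $\bfC_A$. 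Since by Lemma~\ref{lem:bij} the retraction preserves both the number of cycles $c(A)=c(L(A))$ and the weight $w(A)=w(L(A))$, the inductive hypothesis then immediately yields the desired Smith form for $\bfC_A$: the extra invariant factor is $1$, the nontrivial factor $w(A)$ and the $c(A)-1$ zeros are inherited unchanged.

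The main obstacle is making the comparison between $\bfC_A$ and $\bfC_{L(A)}$ precise at the level of integer matrices rather than just ranks. The relation $c_i'=c_i-[\tfrac{c_i+i-1}{n}]$ encodes how a composition series of length $c_i$ ``wraps around'' modulo $n$, and translating this into explicit row/column operations requires careful bookkeeping of which copies of which simples appear in $P_i$ versus $P_i'$, especially distinguishing Case~1 and Case~2 of Lemma~\ref{lem:bij} (whether $S_n$ lies on a cycle). I would handle this by exhibiting the operation as: use the $n$-th column (the projective $P_n$, whose radical surjects onto $\rad P_1$) to clear entries, then delete the resulting trivial row and column. An alternative, cleaner route that sidesteps the combinatorics is to invoke a known structural description of $\bfC_A$ for Nakayama algebras: over the cyclic (non-linear) case $\bfC_A = (I - N)^{-1}$-type expressions built from the nilpotent shift, so that $\det$ and invariant factors can be read off from the weight directly; I would fall back on this only if the direct row-reduction argument proves too delicate. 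In either case, once the matrix-level $\bbZ$-equivalence is established, the conclusion follows formally, and the final rank statement $\rank \bfC_A = n+1-c(A)$ is an immediate corollary of the Smith normal form.
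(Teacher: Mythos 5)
Your proposal follows essentially the same route as the paper: reduce along the left-retraction sequence \eqref{eq:2.2} by showing $\bfC_{A_i}$ is $\bbZ$-equivalent to $\diag(1,\bfC_{A_{i+1}})$, transfer $c(A)$ and $w(A)$ via Lemma~\ref{lem:bij}, and settle the selfinjective base case by the Smith normal form of a circulant matrix (which the paper obtains by citing Williams rather than computing directly). The matrix-level $\bbZ$-equivalence you flag as the main obstacle is exactly the step the paper dismisses as ``easy to show,'' so your plan is a faithful, if more cautious, version of the published argument.
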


\begin{proof}
Recall \eqref{eq:2.2} in Section ~\ref{sec:2}.
For $1\leq i\leq r-1$,
it is easy to show that the Cartan matrix
$\bfC_{A_i}$ of $A_i$ and the block diagonal matrix $\diag(1,\bfC_{A_{i+1}})$
are $\bbZ$-equivalent.

By Lemma ~\ref{lem:bij} we have $c(A_i)=c(A_{i+1})$ and $w(A_i)=w(A_{i+1})$.
Then it suffices to prove the assertion for the selfinjective algebra $A_r$.

Assume that $A$ is a connected selfinjective Nakayama algebra.
Denote by $m$ its radical length.
Then the admissible sequence of $A$ is $(m,m,\cdots,m)$.
It is routine to show that
the resolution quiver of $A$ consists of $\gcd(m,n)$ cycles of the same weight
$w=\frac{m}{\gcd(m,n)}$.
Here, `$\gcd$' is the greatest common divisor.

Let $m=kn+r$ with $k\in \bbN$ and $1\leq r\leq n$.
After possible permutations on simple $A$-modules,
the Cartan matrix $\bfC_A$ is a circulant matrix given by
\[c_{ij} =
\begin{cases}
k+1, & \mbox{ if $0 \leq i - j< r$ or $j - i > n - r$} \\
k,  & \mbox{ otherwise}.
\end{cases}\]
It follows from \cite{Wil2014} that
the Smith normal form of the Cartan matrix $\bfC_A$ is
the diagonal matrix $\diag(1,\cdots,1,w,0,\cdots,0)$
with $\gcd(m,n)-1$ zeros on the diagonal.
Then the rank of $\bfC_A$ is $n+1-\gcd(m,n)$.
This finishes our proof.
\end{proof}

\begin{rmk} \label{rmk:det}
Use the notation in the  Proposition ~\ref{prop:smith}.
\begin{enumerate}
  \item Following \cite[Theorem 6]{BFVZ1985},
  the  global dimension of $A$ is finite if and only if
  the determinant of the Cartan  matrix  $\bfC_A$ is $1$.
  In fact, by \cite[Proposition 2.2(5)]{CY2014} left retractions also preserve
  the determinants of Cartan matrices.
  Then  the determinant of the Cartan  matrix $\bfC_A$ is $w(A)$
  if the resolution quiver of $A$ is connected;
  see also \cite[Lemma 2]{BFVZ1985}.
  This provides another proof of Proposition ~\ref{prop:1.1}.
  \item  Denote by $A^\op$ the opposite algebra of $A$.
  Then the Cartan matrix of $A^\op$  is the transpose  of $\bfC_A$.
  Since $\bfC_A$ and its transpose $\bfC_A^T$ have the same Smith normal form,
  it follows that $c(A)=c(A^\op)$ and $w(A)=w(A^\op)$; see also \cite[Proposition 5.2]{She2015}.
\end{enumerate}
\end{rmk}

Let  $X$  be a subset of  $\{S_1,S_2,\cdots, S_n\}$.
There exists a $n\times 1$ vector $\xi_X$ associated with $X$,
where the $i$-th entry of $\xi_X$ is $1$ if $S_i$ is in $X$
and the $i$-th entry of $\xi_X$ is $0$ if $S_i$ is not in $X$ for $1\leq i\leq n$.
Denote by ${\bf 1}$ the $n\times 1$ vector $(1,\cdots, 1)^T$.

We have the following observation.

\begin{prop} \label{prop:linear}
Let $A$ be a connected Nakayama algebra.
Denote by $\Gamma$ the set of cycles and by $B\Gamma$
the set of black cycles in the resolution quiver of $A$.
\begin{enumerate}
  \item The vectors $\{\xi_C\}_{C\in \Gamma}$ are maximal linearly independent solutions
  to the linear system $\bfC_A \xi = w(A) {\bf 1}$.
  \item The vectors $\{\xi_C\}_{C\in \Gamma}$ are the entire nonnegative integer solutions
  to the linear system  $\bfC_A \xi = w(A) {\bf 1}$.
  \item The vectors $\{\xi_E\}_{E\in B\Gamma}$ are  maximal linearly independent solutions
  to the linear system $\bfC_A^T \xi = \bfC_A \xi = w(A) {\bf 1}$.
\end{enumerate}
\end{prop}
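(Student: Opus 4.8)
The plan is to prove the three parts of Proposition~\ref{prop:linear} by connecting the resolution quiver structure to the Cartan matrix through the map $\gamma$ and its associated permutation. First I would unpack what the linear system $\bfC_A\xi = w(A)\mathbf{1}$ really encodes. The key observation is that $\bfC_A$ acts on the standard basis vectors via the composition lengths $c_j$ of the projective covers, and the resolution quiver records $\gamma(S_j)=S_{f_A(j)}$ where $c_j+j\equiv f_A(j)\pmod n$. I expect that summing the columns of $\bfC_A$ indexed by the vertices of a cycle $C$ should reproduce $w(C)\mathbf{1}=w(A)\mathbf{1}$, using the fact (from Section~\ref{sec:2}) that $w(C)=\sum_{k}c_{x_k}/n$ is an integer and the cyclic structure makes the composition factors wrap around evenly. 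So the first step is to verify directly that each $\xi_C$ is a solution, i.e.\ $\bfC_A\xi_C=w(A)\mathbf{1}$, by a counting argument over the vertices on $C$.

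For part~(1), having shown $\{\xi_C\}_{C\in\Gamma}$ are solutions, I would establish they are linearly independent (immediate, since distinct cycles have disjoint vertex sets, so the $\xi_C$ have disjoint supports) and that they are maximal, i.e.\ their number $c(A)$ equals the dimension of the solution space. The solution space of $\bfC_A\xi=w(A)\mathbf{1}$, being a particular inhomogeneous system, has affine dimension equal to $\dim\ker\bfC_A = n-\rank\bfC_A$. By Proposition~\ref{prop:smith} the rank is $n+1-c(A)$, so the kernel has dimension $c(A)-1$; combined with one particular solution this gives a solution set whose maximal linearly independent subset has size $c(A)$. The main obstacle here is the bookkeeping about affine versus linear independence: the $\xi_C$ are solutions of an inhomogeneous system, so "maximal linearly independent solutions" means we want $c(A)$ of them to span the solution set appropriately, and I would need to check that differences $\xi_C-\xi_{C'}$ lie in $\ker\bfC_A$ and exhaust it, or equivalently argue via the Smith normal form that the $c(A)$ vectors $\xi_C$ are linearly independent solutions and no larger independent family of solutions exists.

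For part~(2), the claim that $\{\xi_C\}_{C\in\Gamma}$ are \emph{all} the nonnegative integer solutions is stronger and this is where I expect the real work. A nonnegative integer solution $\xi=(\xi_1,\dots,\xi_n)^T$ to $\bfC_A\xi=w(A)\mathbf{1}$ must have entries bounded (since $\bfC_A$ has positive entries and the right side is fixed), and I would argue that the support of any such $\xi$ must be a union of cycles in $R(A)$. The idea is to use the combinatorial structure of $R(A)$: each connected component flows into its unique cycle, and the equation at a vertex $S_i$ constrains $\xi$ at preimages under $f_A$. I would likely show that entries on the "tree parts" hanging off cycles must vanish and that entries on a cycle must be constant equal to either $0$ or $1$, forcing $\xi$ to be a $0/1$ indicator of a union of cycles; but since $w(A)$ is a single fixed weight, selecting any proper subset or adding cycles would violate the total-weight count, pinning $\xi$ down to exactly the $\xi_C$. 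This uniqueness/nonnegativity argument is the hard part, as it requires carefully translating the divisibility relation $c_i+i\equiv f_A(i)$ into constraints that propagate along the quiver.

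For part~(3), I would combine the characterization of black cycles from Proposition~\ref{prop:bla-cyc} and Remark~\ref{rmk:det}(2) with parts~(1)--(2). By Proposition~\ref{prop:bla-cyc} a cycle $C$ is black iff its vertices form a $\psi$-cycle, and since $\psi$ governs the resolution quiver of $A^\op$ whose Cartan matrix is $\bfC_A^T$, the solutions of $\bfC_A^T\xi=w(A)\mathbf{1}$ that come from cycles correspond via the same reasoning to $\psi$-cycles. The vectors $\xi_E$ for $E\in B\Gamma$ simultaneously solve $\bfC_A\xi=w(A)\mathbf{1}$ (as cycles, by parts~(1)--(2)) and $\bfC_A^T\xi=w(A)\mathbf{1}$ (as $\psi$-cycles). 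Conversely, a common solution of both systems must be supported on a set that is both a union of $\gamma$-cycles and a union of $\psi$-cycles, which forces it to be a union of black cycles. Linear independence again follows from disjoint supports, and maximality would follow from the rank statement in Proposition~\ref{prop:1.3}(2), namely $\rank(\bfC_A,\bfC_A^T)=n+1-b$, giving the dimension count that matches $b=|B\Gamma|$.
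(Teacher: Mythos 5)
Part (1) of your proposal is correct and is exactly the paper's argument: verify $\bfC_A\xi_C=w(A)\mathbf{1}$ by counting composition factors over a cycle, get linear independence from disjoint supports, and get maximality from the rank formula of Proposition~\ref{prop:smith}. The genuine gap is in part (3), where you anchor maximality on Proposition~\ref{prop:1.3}(2), i.e.\ on $\rank(\bfC_A,\bfC_A^T)=n+1-b$. Inside this paper that is circular: Proposition~\ref{prop:1.3}(1) is proved by Proposition~\ref{prop:smith}, but Proposition~\ref{prop:1.3}(2) is proved only by the Corollary at the end of Section~\ref{sec:4}, which is itself deduced from Proposition~\ref{prop:linear}(3) --- the statement you are trying to prove. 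There is no independent proof of that rank formula available, so it cannot serve as the dimension count. The fix is already latent in your own ``conversely'' sentence, which is the paper's actual argument: apply part (1) to $A$ and to $A^{\op}$ (whose Cartan matrix is $\bfC_A^T$ and whose resolution quiver is governed by $\psi$, with $w(A^{\op})=w(A)$ and $c(A^{\op})=c(A)$ by Remark~\ref{rmk:det}(2)), write any common solution as $\xi=\sum_{C\in\Gamma}a_C\xi_C=\sum_{D\in\Psi}b_D\xi_D$, and then use Proposition~\ref{prop:bla-cyc} --- every non-black $\gamma$-cycle carries a vertex that is not $\psi$-cyclic, hence lies on no $\psi$-cycle --- to kill the coefficients of all non-black cycles, so that $\xi$ lies in the span of $\{\xi_E\}_{E\in B\Gamma}$. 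That support/independence argument must be promoted from a side remark to the proof of maximality; the rank count has to be discarded (it then becomes a corollary, as in the paper).

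Part (2) is also not done as written. You defer to a combinatorial propagation argument (``entries on tree parts vanish, cycle entries are constant $0$ or $1$'') which you acknowledge is ``the hard part'' and never execute; as it stands this is a plan, not a proof. Moreover it is unnecessary: (2) follows immediately from (1). Since the $c(A)$ vectors $\{\xi_C\}_{C\in\Gamma}$ are linearly independent solutions of the inhomogeneous system, and the solution set is an affine subspace of dimension $\dim\ker\bfC_A=c(A)-1$, the solution set equals the affine hull of the $\xi_C$; thus every solution has the form $\sum_{C\in\Gamma}a_C\xi_C$ with $\sum_C a_C=1$. Such a vector has entry $a_C$ at each vertex of $C$ and entry $0$ at every vertex lying on no cycle, so a nonnegative integer solution forces all $a_C$ to be nonnegative integers summing to $1$, i.e.\ $\xi=\xi_C$ for exactly one $C$. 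This one-line deduction is what the paper means by ``(2) follows from (1)'', and your proof should contain it (or else actually carry out the propagation argument, which would require turning the system into a flow equation on $R(A)$ by subtracting consecutive rows --- work that is absent from the proposal).
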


\begin{proof}
(1)  Since  $C$ is a cycle in the resolution quiver,
for $1\leq i\leq n$ the simple $A$-module $S_i$
appears exactly $w(A)$ times in the direct sum $\oplus_{S\in C}P(S)$.
It follows that $\bfC_A \xi_C = w(A) {\bf 1}$.
Since the vectors $\{\xi_C\}_{C\in \Gamma}$ have disjoint support,
they are linearly independent.
Then we obtain $c(A)$ linearly independent solutions to the linear system
$\bfC_A \xi = w(A) {\bf 1}$.
By Proposition ~\ref{prop:smith} the number of these solutions is $n + 1 - \rank \bfC_A$.
Therefore, the solutions  $\{\xi_C\}_{C\in \Gamma}$ are maximal.

(2) This follows from (1).

(3) Let $E$ be a cycle in the resolution quiver of $A$.
By Proposition ~\ref{prop:bla-cyc} the cycle   $E$ is
black if and only if the vertices of $E$ form a $\psi$-cycle.
By (2) the vertices of $E$ form a $\psi$-cycle if and only if
$\bfC_A^T \xi_E = w(A) {\bf 1}$.
Then the vectors $\{\xi_E\}_{E\in B\Gamma}$ are linearly independent solutions
to the linear system  $\bfC_A^T \xi = \bfC_A \xi = w(A) {\bf 1}$.

Denote by  $\Psi$ is the set of $\psi$-cycles for $A$.
We claim that the vectors $\{\xi_C\}_{C\in \Gamma\cup\Psi}$ are linearly independent.
For a solution $\xi$ to the desired linear system,
by (1) we have $\xi = \sum_{C\in \Gamma} a_{C}\xi_C=\sum_{D\in \Psi} b_D\xi_D$.
Observe that the intersection $\Gamma\cap \Psi$ is the set  of black cycles.
Since  the vectors $\{\xi_C\}_{C\in \Gamma\cup\Psi}$ are linearly  independent,
we have $\xi = \sum_{C\in B\Gamma} a_C\xi_C$.
This proves (3).

For the claim, let $\sum_{C\in \Gamma\cup\Psi} a_C \xi_C=0$.
For a non-black cycle $C \in \Gamma$,
it follows from Proposition ~\ref{prop:bla-cyc} that there exists
some $S_i$ on $C$  which is not $\psi$-cyclic.
Observe that the $i$-th entry of $\sum_{C\in \Gamma\cup\Psi} a_C \xi_D$ is $a_C$.
It follows that $a_C=0$ for each non-black cycle $C\in \Gamma$.
Since the vectors $\{\xi_C\}_{C \in \Psi}$ have disjoint support,
$a_C=0$ for each $\psi$-cycle $C$.
Therefore, the vectors $\{\xi_C\}_{C\in \Gamma\cup\Psi}$ are linearly independent.
\end{proof}

Denote by $b(A)$ be the number of black cycles in the resolution quiver of $A$.

We have the following.

\begin{cor}
Let $A$ be a connected Nakayama algebra.
\begin{enumerate}
  \item $b(A)$ is nonzero if and only if $\rank  \begin{pmatrix} \bfC_A^T  \\   \bfC_A \end{pmatrix} = \rank \begin{pmatrix} \bfC_A^T &{\bf 1} \\   \bfC_A  & {\bf 1}\end{pmatrix}$.
  \item If $b(A)$ is nonzero, then $b(A) = n(A) + 1 - \rank (\bfC_A,\bfC_A^T)$.
\end{enumerate}
\end{cor}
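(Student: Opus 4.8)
The plan is to read off both statements from Proposition~\ref{prop:linear}(3), which identifies $\{\xi_E\}_{E\in B\Gamma}$ as a maximal linearly independent set of solutions to the system
\[
\begin{pmatrix}\bfC_A^T\\ \bfC_A\end{pmatrix}\xi = w(A)\begin{pmatrix}{\bf 1}\\ {\bf 1}\end{pmatrix},
\]
so that this system has exactly $b(A)$ maximal linearly independent solutions. Everything then reduces to standard linear algebra over $\mathbb{Q}$: the Rouch\'e--Capelli consistency criterion for part (1) and rank--nullity for part (2), using throughout that $w(A)$ is a nonzero integer (see Section~\ref{sec:2}) and that $\rank M = \rank M^T$, which in our case reads $\rank\begin{pmatrix}\bfC_A^T\\ \bfC_A\end{pmatrix} = \rank(\bfC_A,\bfC_A^T)$.

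For part (1), I would first observe that consistency of the displayed system is, by Rouch\'e--Capelli, equivalent to the equality of the rank of its coefficient matrix $\begin{pmatrix}\bfC_A^T\\ \bfC_A\end{pmatrix}$ and the rank of its augmented matrix $\begin{pmatrix}\bfC_A^T & w(A){\bf 1}\\ \bfC_A & w(A){\bf 1}\end{pmatrix}$; since scaling the last column by the nonzero scalar $w(A)$ leaves the rank unchanged, the augmented rank equals $\rank\begin{pmatrix}\bfC_A^T & {\bf 1}\\ \bfC_A & {\bf 1}\end{pmatrix}$. It then remains to match consistency with the condition $b(A)\neq 0$. If $b(A)\neq 0$, then any $\xi_E$ is a solution. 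Conversely, if the system is consistent, its nonempty affine solution set contains a particular solution $\xi_0$, which is nonzero because $\bfC_A\xi_0 = w(A){\bf 1}\neq 0$; hence the maximal linearly independent set of solutions is nonempty, forcing $b(A)\geq 1$ via Proposition~\ref{prop:linear}(3).

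For part (2), assume $b(A)\neq 0$. The associated homogeneous system $\bfC_A^T\xi = \bfC_A\xi = 0$ has solution space $\Ker\begin{pmatrix}\bfC_A^T\\ \bfC_A\end{pmatrix}$, of dimension $n(A) - \rank(\bfC_A,\bfC_A^T)$ by rank--nullity. The inhomogeneous solution set is an affine translate $\xi_0 + \Ker\begin{pmatrix}\bfC_A^T\\ \bfC_A\end{pmatrix}$, and its linear span has dimension one larger than this kernel, precisely because the particular solution $\xi_0$ satisfies $\bfC_A\xi_0 = w(A){\bf 1}\neq 0$ and so lies outside the kernel. The number of maximal linearly independent solutions is therefore $n(A) + 1 - \rank(\bfC_A,\bfC_A^T)$, which equals $b(A)$ by Proposition~\ref{prop:linear}(3), giving the claimed formula. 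As a consistency check, the same argument applied to Proposition~\ref{prop:linear}(1) together with Proposition~\ref{prop:smith} reproduces the count $c(A) = n(A)+1-\rank\bfC_A$.

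The only genuinely delicate point---and the step I expect to need the most care---is this passage from the dimension of the homogeneous solution space to the number of maximal linearly independent solutions of the inhomogeneous system: the extra $+1$ is not automatic and relies on the particular solution being linearly independent from the homogeneous kernel. This is exactly where the hypothesis $w(A)\neq 0$ enters, guaranteeing $w(A){\bf 1}\neq 0$ and hence $\xi_0\notin\Ker\begin{pmatrix}\bfC_A^T\\ \bfC_A\end{pmatrix}$. Once this is pinned down, both parts are routine rank bookkeeping.
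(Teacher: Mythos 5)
Your proposal is correct and is essentially the paper's intended argument: the paper states this corollary without proof as an immediate consequence of Proposition~\ref{prop:linear}(3), and your write-up supplies exactly the implicit linear algebra (Rouch\'e--Capelli for consistency, the identity $\rank\begin{pmatrix}\bfC_A^T\\ \bfC_A\end{pmatrix}=\rank(\bfC_A,\bfC_A^T)$, and the ``$+1$'' coming from the particular solution lying outside the homogeneous kernel since $w(A)\neq 0$). Your handling of the converse in part~(1) -- reading Proposition~\ref{prop:linear}(3) in the case $B\Gamma=\emptyset$ as saying the system has no solutions -- is the right way to close that direction, and it matches what the paper's proof of that proposition actually establishes.
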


{\bf\noindent Acknowledgements.}
The author is very thankful to Professor Xiao-Wu Chen and Professor Guodong Zhou
for numerous  inspiring discussions.
This work is supported by China Postdoctoral Science Foundation (No. 2015M581563)
and Science and Technology Commission of Shanghai Municipality (No. 13DZ2260400).

\end{document}